\newtheorem{thm}{Theorem}[section]
\newtheorem{cor}[thm]{Corollary}
\newtheorem{prop}[thm]{Proposition}
\newtheorem{lem}[thm]{Lemma}
\theoremstyle{definition}
\newtheorem{defn}[thm]{Definition}
\newtheorem{rem}[thm]{Remark}
\newtheorem{notation}[thm]{Notation}
\let\c@equation\c@thm
\numberwithin{equation}{section}
\let\SK@label\label\fi
 \let\your@thm\@thm
 \def\@thm#1#2#3{\gdef\currthmtype{#3}\your@thm{#1}{#2}{#3}}
 \def\mylabel#1{{\let\your@currentlabel\@currentlabel\def\@currentlabel
  {\currthmtype~\your@currentlabel}
 \SK@label{#1@}}\label{#1}}
\title{Some extensions in the Adams spectral sequence and the 51-stem}
\author{Guozhen Wang}
\address{Shanghai Center for Mathematical Sciences, Fudan University, Shanghai, China, 200433}
\email{wangguozhen@fudan.edu.cn}
\author{Zhouli Xu}
\address{Department of Mathematics, Massachusetts Institute of Technology, Cambridge, MA 02139}
\email{xuzhouli@math.mit.edu}
\begin{document}

\maketitle

\begin{abstract}
We show a few nontrivial extensions in the classical Adams spectral sequence. In particular, we compute that the 2-primary part of $\pi_{51}$ is $\mathbb{Z}/8\oplus\mathbb{Z}/8\oplus\mathbb{Z}/2$. This was the last unsolved 2-extension problem left by the recent works of Isaksen and the authors (\cite{Isa1}, \cite{IX}, \cite{WX1}) through the 61-stem.

The proof of this result uses the $RP^\infty$ technique, which was introduced by the authors in \cite{WX1} to prove $\pi_{61}=0$. This paper advertises this technique through examples that have simpler proofs than in \cite{WX1}.
\end{abstract}

\tableofcontents

\section{Introduction}

The computation of the stable homotopy groups of spheres is both a fundamental and a difficult problem in homotopy theory. Recently, using Massey products and Toda brackets, Isaksen \cite{Isa1} extended the 2-primary Adams spectral sequence computations to the 59-stem, with a few $2, \eta, \nu$-extensions unsettled.

Based on the algebraic Kahn-Priddy theorem \cite{KP, Lin}, the authors \cite{WX1} compute a few differentials in the Adams spectral sequence, and proved that $\pi_{61}=0$. The 61-stem result has the geometric consequence that the 61-sphere has a unique smooth structure, and it is the last odd dimensional case. In the article \cite{WX1}, it took the authors more than 40 pages to introduce the method and prove one Adams differential $d_3(D_3) = B_3$. Here $B_3$ and $D_3$ are certain elements in the 60 and 61-stem. Our notation will be consistent with \cite{Isa1} and \cite{WX1}.

In this paper, we show that our technique can also be used to solve extension problems in the Adams spectral sequence. We establish a nontrivial 2-extension in the 51-stem, together with a few other extensions left unsolved by Isaksen \cite{Isa1}. As a result, we have the following proposition.

\begin{prop}
There is a nontrivial 2-extension from $h_0h_3g_2$ to $gn$ in the 51-stem.
\end{prop}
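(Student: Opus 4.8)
The plan is to detect this hidden $2$-extension by transporting it to the Adams spectral sequence of $RP^\infty$, where multiplication by $2$ is governed by the degree-two attaching maps of the cell structure (equivalently, by the $Sq^1$-action linking the odd cell $S^{2k-1}$ to the even cell $S^{2k}$). Concretely, fix a class $\alpha\in\pi_{51}$ detected by $h_0h_3g_2$; we must show that $2\alpha$ is detected by $gn$, which lies in strictly higher Adams filtration. First I would invoke the geometric Kahn--Priddy theorem \cite{KP,Lin}: the transfer $\lambda\colon \Sigma^\infty RP^\infty\to S^0$ is surjective onto the $2$-primary part of $\pi_{51}$, so $\alpha=\lambda_*\tilde\alpha$ for some $\tilde\alpha\in\pi_{51}(RP^\infty)$. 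Using the algebraic Kahn--Priddy theorem together with the known $\mathrm{Ext}$-chart of $RP^\infty$ through this range, I would pin down the Adams filtration of $\tilde\alpha$ and name the class $\tilde x$ detecting it, checking that it maps to $h_0h_3g_2$ under the transfer.

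Since $\lambda$ is a map of spectra it commutes with multiplication by $2$, so $2\alpha=\lambda_*(2\tilde\alpha)$, and it suffices to compute $2\tilde\alpha$ upstairs and track its image. The point of working in $RP^\infty$ is that, because consecutive cells are attached by degree-two maps, multiplication by $2$ relates classes supported on adjacent cells; the extension that is hidden on $S^0$ is thereby expected to appear as a genuine, non-hidden $h_0$-extension on $RP^\infty$, visible already on the $E_2$-page up to a lower differential. The key step is therefore to identify the class $\widetilde{gn}$ lying over $gn$ and to verify the $h_0$-extension $\tilde x\mapsto\widetilde{gn}$ in the Adams spectral sequence of $RP^\infty$; this is the one computation I would carry out in detail, restricting if convenient to a finite stunted projective space $P^b_a$ containing only the relevant cells, so that the $E_2$-chart and the few possibly interfering differentials are completely under control. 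Having established this, naturality of $\lambda$ yields that $2\alpha$ is detected by the image of $\widetilde{gn}$, which I would confirm to be exactly $gn$ rather than a class of even higher filtration.

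The main obstacle is this central computation together with its bookkeeping. I expect the delicate points to be (i) matching, without ambiguity, the source $h_0h_3g_2$ and the target $gn$ across the Kahn--Priddy comparison, since this comparison need not behave naively with respect to Adams filtration; and (ii) controlling indeterminacy, that is, showing that no other $\mathrm{Ext}$-class in the $51$-stem between the filtrations of $h_0h_3g_2$ and $gn$ can detect $2\alpha$, and that the extension is genuinely nonzero, so that $2\alpha\neq 0$. Both reduce to finite checks against the charts of \cite{Isa1} and the $RP^\infty$-computation, but they are where the argument must be made airtight.
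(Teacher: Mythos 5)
Your strategy --- lift a class $\alpha$ detected by $h_0h_3g_2$ along the Kahn--Priddy transfer and compute the $2$-extension upstairs in $RP^\infty$ --- is not the paper's route, and its central step cannot work as stated. The obstruction is the filtration behavior of the transfer $\lambda\colon P_1^\infty\to S^0$: since $\lambda$ induces the zero map on mod $2$ homology, the map it induces on Adams spectral sequences is the algebraic transfer, which shifts bidegree by $(s,t)\mapsto (s+1,t+1)$; this is exactly how the paper uses it in its Lemma 4.9, where $h_1t[9]$ in filtration $7$ maps to $N$ in filtration $8$. Now $h_0h_3g_2$ has filtration $6$, while $gn$ has filtration $9$ (from $gnr=mN$ in filtration $15$, with $r$ in filtration $6$). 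Hence any class $\tilde x$ transferring to $h_0h_3g_2$ sits in filtration $5$, any class $\widetilde{gn}$ transferring to $gn$ sits in filtration $8$, and $h_0\tilde x$, which sits in filtration $6$, can never equal $\widetilde{gn}$ --- not on $E_2$, and not ``up to a lower differential'' either, since differentials do not move bidegrees. In other words, the $2$-extension remains hidden (a jump of three filtrations) after lifting to $RP^\infty$; the degree-two attaching maps (the $Sq^1$'s) do not convert it into a visible $h_0$-multiplication. Your proposal therefore reduces the problem to an equally hidden extension in $\pi_{51}(P_1^\infty)$, with no mechanism supplied to resolve it. The point of the $RP^\infty$ technique is not that hidden extensions become visible upstairs; it is that upstairs one can play the Adams spectral sequences of various $H\mathbb{F}_2$-subquotients against one another and against specific known differentials.

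The paper in fact never attacks the $51$-stem extension directly. It proves, via the $RP^\infty$ technique, a $\sigma$-extension from $h_3d_1$ to $N$ in the \emph{$46$-stem} (Proposition 1.3), whose proof hinges on very particular structure there: Bruner's differential $d_3(e_1)=h_1t$, the relation $h_1e_1=h_3d_1$ interacting with the $\eta$-attaching map of $\Sigma^7C\eta$, and a detection argument pushing $\sigma\{d_1\}[7]$ through $S^7\hookrightarrow \Sigma^7C\eta$, then $P_7^9$, $P_1^9$, $P_1^\infty$, and finally the transfer. It then converts this to the $51$-stem: the Massey product $gn=\langle N,h_1,h_2\rangle$, Moss's theorem, and a Toda-bracket shuffle give the $\nu$-extension from $h_2h_5d_0$ to $gn$, and Isaksen's Lemma 4.2.31 states that this $\nu$-extension is equivalent to the $2$-extension from $h_0h_3g_2$ to $gn$. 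To salvage your plan you would need an analogue of that extra geometric input for the $51$-stem itself; as written, the proposal has a genuine gap at precisely the step you flagged as the one computation to be carried out in detail.
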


We'd like to point out that this is also a nontrivial 2-extension in the Adams-Novikov spectral sequence.

Combining with Theorem 1.1 of \cite{IX}, which describes the group structure of $\pi_{51}$ up to this 2-extension, we have the following corollary.

\begin{cor}
The 2-primary $\pi_{51}$ is $\mathbb{Z}/8\oplus\mathbb{Z}/8\oplus\mathbb{Z}/2$, generated by elements that are detected by $h_3g_2, P^6h_2$ and $h_2B_2$.
\end{cor}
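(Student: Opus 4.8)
The plan is to deduce the group structure of $\pi_{51}$ by feeding the nontrivial $2$-extension of the Proposition into the partial computation of \cite{IX}. Theorem 1.1 of \cite{IX} identifies the $E_\infty$-page of the classical Adams spectral sequence in the $51$-stem and resolves all of its multiplicative extensions with the single exception of whether the $2$-extension from $h_0h_3g_2$ to $gn$ is trivial; it thereby pins $\pi_{51}$ down to one of two abelian groups of the same order. The seven relevant $E_\infty$-classes are the $h_0$-tower $h_3g_2,\,h_0h_3g_2$, the class $gn$ in strictly higher Adams filtration, the $h_0$-tower $P^6h_2,\,h_0P^6h_2,\,h_0^2P^6h_2$, and the class $h_2B_2$. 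By \cite{IX} the second tower assembles, through its visible $h_0$-multiplications, into a $\mathbb{Z}/8$ generated by a class detected by $P^6h_2$; the class $h_2B_2$, which supports no nonzero $h_0$-multiple, contributes a $\mathbb{Z}/2$; and there are no hidden extensions linking the $h_3g_2$-tower to either of these summands.

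It then remains to resolve the three classes $h_3g_2,\,h_0h_3g_2,\,gn$, and here I would use the Proposition directly. Let $\alpha\in\pi_{51}$ be a class detected by $h_3g_2$. The visible relation $h_0\cdot h_3g_2=h_0h_3g_2$ on $E_\infty$ shows that $2\alpha$ is detected by $h_0h_3g_2$, and the $2$-extension of the Proposition shows that $4\alpha=2\cdot(2\alpha)$ is detected by $gn$. Since $gn$ carries no further nonzero $h_0$-multiple and no $2$-extension above it (again by \cite{IX}), we obtain $8\alpha=0$, so $\alpha$ generates a $\mathbb{Z}/8$. Assembling the three summands gives $\pi_{51}\cong\mathbb{Z}/8\oplus\mathbb{Z}/8\oplus\mathbb{Z}/2$ with generators detected by $h_3g_2$, $P^6h_2$ and $h_2B_2$.

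The main obstacle is not in this assembly: all the difficulty is concentrated in the Proposition, whose $2$-extension is precisely the datum that selects $\mathbb{Z}/8\oplus\mathbb{Z}/8\oplus\mathbb{Z}/2$ over the alternative $\mathbb{Z}/4\oplus\mathbb{Z}/8\oplus\mathbb{Z}/2\oplus\mathbb{Z}/2$ that a trivial extension would produce. The only point in the corollary itself that requires care is to verify that the direct-sum decomposition is legitimate, i.e. that no hidden extension crosses between the three towers; this is supplied by Theorem 1.1 of \cite{IX}, which leaves only the single extension above undetermined.
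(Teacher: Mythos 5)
Your proposal is correct and takes essentially the same route as the paper: the paper deduces the corollary immediately by combining Proposition 1.1 (the nontrivial 2-extension from $h_0h_3g_2$ to $gn$) with Theorem 1.1 of \cite{IX}, which pins down $\pi_{51}$ up to exactly that one extension. Your write-up simply makes explicit the $E_\infty$-page bookkeeping (the two $h_0$-towers, $h_2B_2$, and the absence of other hidden extensions) that the paper delegates to \cite{IX}.
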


Using a Toda bracket argument, Proposition 1.1 is deduced from the following $\sigma$-extension in the 46-stem.

\begin{prop}~
\begin{enumerate}
\item
There is a nontrivial $\sigma$-extension from $h_3d_1$ to $N$ in the 46-stem.
\item
There is a nontrivial $\eta$-extension from $h_1g_2$ to $N$ in the 46-stem.
\end{enumerate}
\end{prop}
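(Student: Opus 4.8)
The plan is to establish both extensions by exhibiting explicit lifts in the $RP^\infty$ (or more precisely $\mathbb{R}P^\infty$-based Kahn--Priddy) technique developed in \cite{WX1}, since a direct attack in the sphere spectrum is obstructed by the density of possible crossing differentials and hidden extensions near the 46-stem. Concretely, I would first settle part (2), the $\eta$-extension from $h_1g_2$ to $N$, because $\eta$-multiplications interact cleanly with the Kahn--Priddy transfer and because $N$ is detected in a filtration low enough that the relevant Adams $E_\infty$-page entries in stems 45--47 can be read off from \cite{Isa1}. The strategy is to choose a class $\alpha \in \pi_{45}$ detected by $h_1g_2$, multiply by $\eta$, and show that the product is detected by $N$ rather than dying or being carried to a higher filtration. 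To do this I would pull the computation back along the transfer map $\lambda\colon \Sigma^{\infty}\mathbb{R}P^\infty \to S^0$, identify the image of $h_1 g_2$ and of $N$ in the Adams spectral sequence for $\mathbb{R}P^\infty$, and use the algebraic structure there, where the $\eta$-extension is forced by a nontrivial $h_1$-multiplication that is visible on the $E_2$-page of the $\mathbb{R}P^\infty$ computation.

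Second, I would deduce part (1), the $\sigma$-extension from $h_3 d_1$ to $N$, by leveraging part (2) together with the multiplicative relations among $h_1$, $h_3$, $d_1$, $g_2$, and $N$ in the Adams $E_2$-page. The key observation I would exploit is that both $h_1 g_2$ and $h_3 d_1$ are related to $N$ through the same Massey-product or Toda-bracket structure, so once the target $N$ is shown to receive the $\eta$-extension, a bracket manipulation of the form $\langle \sigma, \, \cdot\, , \eta\rangle$ (or the juggling of the relevant three-fold Toda bracket containing $\sigma$ and $\eta$) transfers the nontriviality to the $\sigma$-extension. I would verify that the indeterminacy of the relevant bracket does not absorb the class detected by $N$, which is the step where I would again invoke the $\mathbb{R}P^\infty$ computation to control elements of higher Adams filtration.

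The main obstacle I anticipate is controlling the Adams filtration jump and ruling out that the putative target of either extension is actually hit by something of lower filtration or absorbed into indeterminacy; in stems this high the $E_\infty$-page is dense and $N$ sits in a region with several nearby classes ($gn$, $h_0^2 h_3 g_2$, and products thereof) that could interfere. Concretely, the hard part is proving that the $\eta$-multiple (respectively the $\sigma$-multiple) is detected \emph{exactly} by $N$ and not by a class in strictly higher filtration, which requires showing that no hidden extension or crossing differential intervenes. The $RP^\infty$ technique is precisely what resolves this: by transferring to $\mathbb{R}P^\infty$, where the relevant comparison maps and the algebraic Kahn--Priddy theorem \cite{KP, Lin} pin down the filtration of the transferred classes, one converts an a priori indeterminate detection statement in the sphere into a determinate algebraic computation. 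I would expect the bulk of the genuine work to lie in this filtration bookkeeping, with the formal bracket and multiplicative manipulations being comparatively routine once part (2) is in hand.
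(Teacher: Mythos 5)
Your high-level architecture coincides with the paper's in one respect: both proofs establish exactly one of the two extensions directly via the $RP^\infty$ technique and obtain the other from a Toda-bracket shuffle (the paper's Lemma 2.2, which uses Bruner's differential $d_3(e_1)=h_1t=h_2^2n$, the resulting Massey products $h_3d_1=\langle h_2n,h_2,h_1\rangle$ and $h_1g_2=\langle h_3,h_2n,h_2\rangle$ in the $E_4$-page, Moss's theorem, and the shuffle $\sigma\cdot\langle\nu\{n\},\nu,\eta\rangle=\langle\sigma,\nu\{n\},\nu\rangle\cdot\eta$). But you have chosen the wrong extension as the base case, and the step you gesture at --- that the $\eta$-extension is ``forced by a nontrivial $h_1$-multiplication visible on the $E_2$-page of the $RP^\infty$ computation'' --- is precisely where the entire proof must live, and it is not what happens. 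The preimage of $N$ under the transfer is $h_1t[9]$, and the multiplication that becomes visible is by $h_2$, not $h_1$: in $Ext(\Sigma^7C\eta)$ one has $h_1t[9]=h_2\cdot h_2n[9]$, where $\Sigma^7C\eta$ (cells in dimensions 7 and 9) is an $H\mathbb{F}_2$-subcomplex of $P_7^9$. The paper proves the $\sigma$-extension directly because $\sigma$ is the composite $S^7\hookrightarrow P_1^\infty\to S^0$ (Hopf invariant one), and because the bracket $\langle\eta,\nu\{n\},\nu\rangle$ detecting $h_3d_1$ \emph{begins with} $\eta$, so it can be unwound inside the cofiber of $\eta$: Lemma 5.3 gives $\nu\{n\}[9]\cdot\nu=\langle\eta,\nu\{n\},\nu\rangle[7]=\sigma\{d_1\}[7]$ in $\pi_{46}(\Sigma^7C\eta)$, which is the detection statement (filtration jump from 5 to 7) that anchors everything. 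Your proposal never identifies the preimage of $N$, the intermediate complexes $\Sigma^7C\eta\subseteq P_7^9\twoheadleftarrow P_1^9$, or Bruner's differential, which is the input that creates the filtration jump in the first place.

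Moreover, the reversed order is not merely cosmetically different; the direct route to the $\eta$-extension faces a structural obstruction. To mimic the paper's argument through the bottom cell (using that $S^1\hookrightarrow P_1^\infty\to S^0$ is $\eta$), you would need to unwind the bracket $\langle\sigma,\nu\{n\},\nu\rangle$ detecting $h_1g_2$ inside a two-cell complex with cells in dimensions 1 and 9, i.e.\ an $H\mathbb{F}_2$-subcomplex of $P_1^\infty$ of the form $\Sigma C\sigma$. No such subcomplex exists: the composite $S^8\xrightarrow{\Sigma\sigma}S^1\hookrightarrow P_1^\infty$ is nonzero, since composing with the transfer gives $\eta\sigma\neq0$ in $\pi_8 S^0$, so the bottom-cell inclusion does not extend over a 9-cell attached by $\Sigma\sigma$. (By contrast, the paper's route works exactly because $\Sigma^7C\eta$ does sit inside $P_7^9$.) Without such an anchor, the claim that the lift of a class detected by $h_1g_2$ lands in filtration 7 of $Ext(P_1^\infty)$ and is detected by $h_1t[9]$ --- rather than by one of the filtration 5 and 6 survivors such as $Ph_1h_5[6]$, $h_0^2g_2[2]$, or $h_1x[8]$ --- has no proof mechanism behind it; the ``filtration bookkeeping'' you defer to is not routine but is the theorem. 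The correct repair is the paper's order: prove the $\sigma$-extension from $h_3d_1$ to $N$ directly (Steps 1--4 of Section 4, ending with the algebraic Kahn--Priddy identification $h_1t[9]\mapsto N$), then transport it to the $\eta$-extension by the bracket equivalence.
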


As a corollary, we prove a few more extensions.

\begin{cor}~
\begin{enumerate}
\item
There is a nontrivial $\eta$-extension from $C$ to $gn$ in the 51-stem.
\item
There is a nontrivial $\nu$-extension from $h_2h_5d_0$ to $gn$ in the 51-stem.
\item
There is a nontrivial $\sigma$-extension from $h_0^2g_2$ to $gn$ in the 51-stem. In particular, the element $gn$ detects $\sigma^3\theta_4$.
\end{enumerate}
\end{cor}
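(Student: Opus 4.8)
The plan is to deduce all three extensions from Proposition 1.4, together with the multiplicative data already tabulated by Isaksen in \cite{Isa1}; the only genuinely new tool is a sequence of Toda bracket manipulations. Throughout write $\{x\}$ for a homotopy class detected by the Adams class $x$, so that $\sigma = \{h_3\}$, $\eta = \{h_1\}$, $\nu = \{h_2\}$ and $\theta_4 = \{h_4^2\}$. The passage between $\mathrm{Ext}$ and homotopy is governed by Moss's convergence theorem: a Massey product $\langle a,b,c\rangle$ in $\mathrm{Ext}$ with no crossing differential detects the Toda bracket $\langle\{a\},\{b\},\{c\}\rangle$. Since the three target classes all coincide with the single 51-stem class $\{gn\}$, the efficient route is to pin down $\{gn\}$ once and then realize it successively as $\eta\{C\}$, $\nu\{h_2h_5d_0\}$ and $\sigma\{h_0^2g_2\}$.

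I would start with part (3), which simultaneously yields the final assertion. From the charts of \cite{Isa1} one reads off that $h_0^2g_2$ detects $\sigma^2\theta_4$ in the 44-stem. Granting the $\sigma$-extension $h_0^2g_2\to gn$, the product $\sigma\cdot\sigma^2\theta_4=\sigma^3\theta_4$ is then forced to be nonzero and detected by its target; checking that this target has the Adams filtration of $gn$ (rather than something higher) gives both the extension and the claim that $gn$ detects $\sigma^3\theta_4$. To produce the extension I would run the same Toda bracket argument used to pass from Proposition 1.4(1) to Proposition 1.1: Proposition 1.4(1) supplies $\sigma\{h_3d_1\}=\{N\}$ in the 46-stem, and a shuffle relating $\{gn\}$ to $\{N\}$ through $\sigma$ identifies $\sigma\{h_0^2g_2\}$ with $\{gn\}$. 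This is moreover consistent with Proposition 1.1, since $2\{h_0h_3g_2\}=\{gn\}$ and $\sigma\{h_0^2g_2\}$ are both governed by the $h_0$-multiplications on $h_3g_2$. Parts (1) and (2) follow the identical template with $\eta$ and $\nu$ replacing $\sigma$: one locates in \cite{Isa1} the brackets or products tying $\{C\}$ (stem 50) and $\{h_2h_5d_0\}$ (stem 48) to the 45/46-stem inputs $\{h_1g_2\}$ and $\{N\}$, and then applies Proposition 1.4(2) together with the $\eta$- and $\nu$-shuffles to land in $\{gn\}$.

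The main obstacle is the bookkeeping that certifies these brackets compute what we want. I would need to (i) verify at the $E_2$-level that the relevant Massey products are defined and contain the named classes, in particular reading the products $h_1\cdot C$, $h_2\cdot h_2h_5d_0$ and $h_3\cdot h_0^2g_2$ off the $\mathrm{Ext}$ chart and confirming they vanish there, so that the homotopy extensions are genuinely hidden; (ii) check Moss's hypotheses, namely that no Adams $d_2$ or $d_3$ crosses the bracket, which in this range reduces to excluding a short explicit list of candidate sources and targets; and (iii) control the indeterminacy of each Toda bracket, showing it lies in strictly higher Adams filtration than $gn$ so that detection by $gn$ is not absorbed. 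Step (iii) is where I expect the real difficulty, since the indeterminacies of $\langle\eta,-,-\rangle$ and $\langle\nu,-,-\rangle$ in the 51-stem can be sizeable; ruling out that they swallow the extensions requires the full group structure of $\pi_{51}$ from Theorem 1.1 of \cite{IX}, with Proposition 1.1 available as a cross-check, all four statements describing the same class $\{gn\}=\sigma^3\theta_4$.
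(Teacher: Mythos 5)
There is a genuine gap --- in fact two --- in your proposal, even though its broad flavor (Proposition 1.3 plus Moss's theorem plus Toda-bracket shuffles plus Isaksen's lemmas) matches the paper.

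First, your plan never identifies the actual brackets that drive the argument, and your guess about where they come from is wrong. The engine of the paper's proof is Lemma 2.1: the $E_2$-level Massey products $gn = \langle N, h_1, h_2\rangle = \langle N, h_2, h_1\rangle$. These are not read off from \cite{Isa1}; they are extracted from Bruner's machine computation (the relation $gnr = mN$ and the structure of $Ext^{15,81+15}$) together with Tangora's Massey product $m = \langle r, h_1, h_2\rangle$. Moreover, no bracket ``tying $\{C\}$ or $\{h_2h_5d_0\}$ to $\{h_1g_2\}$ and $\{N\}$'' is used or needed: the classes $C$ and $h_2h_5d_0$ enter only at the last step, when the source of the extension is pinned down by a filtration count together with Isaksen's Lemma 4.2.51 (which excludes $h_5c_1$ as the source of the $\eta$-extension). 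Relatedly, your treatment of part (3) is circular (``granting the $\sigma$-extension\dots'') and the ``shuffle relating $\{gn\}$ to $\{N\}$ through $\sigma$'' is never specified; the paper instead deduces (3) from (2) by pure Ext-level bookkeeping: Isaksen's Lemma 4.2.31 converts the $\nu$-extension on $h_2h_5d_0$ into the 2-extension on $h_0h_3g_2$ (Proposition 1.1), which is then equivalent to the $\sigma$-extension on $h_0^2g_2$ because $\sigma$ is detected by $h_3$ and $\sigma^2\theta_4$ by $h_0^2g_2$.

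Second, and more seriously, your step (iii) has the logic of the indeterminacy inverted. You propose to show the indeterminacy of each Toda bracket is small (in higher filtration) so that ``detection by $gn$ is not absorbed.'' The paper's argument is exactly the opposite: Moss's theorem places a class detected by $gn$ inside $\langle \sigma^2\{d_1\}, \nu, \eta\rangle$ (resp.\ $\langle \sigma^2\{d_1\}, \eta, \nu\rangle$); a shuffle such as $\langle \sigma^2\{d_1\}, \nu, \eta\rangle \supseteq \sigma\{d_1\}\cdot\langle\sigma,\nu,\eta\rangle = 0$ (using $\langle\sigma,\nu,\eta\rangle \subseteq \pi_{12} = 0$) shows the bracket contains zero, so the class detected by $gn$ lies \emph{in} the indeterminacy $\eta\cdot\pi_{50}$ (resp.\ $\nu\cdot\pi_{48}$). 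That membership is precisely what makes the detected class $\eta$-divisible (resp.\ $\nu$-divisible), which is the whole point. If you succeeded in showing the indeterminacy were negligible, you would have destroyed the argument rather than completed it, since the divisibility --- and hence the extension --- comes from the indeterminacy itself.
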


\begin{rem}
In \cite{Isa1}, Isaksen had an argument that implies the nonexistence of the two $\eta$-extensions on $h_1g_2$ and $C$, which is contrary to our results in Proposition 1.3 and Corollary 1.4. Isaksen's argument fails because of neglect of the indeterminacy of a certain Massey product in a subtle way. For more details, see Remark 2.3.
\end{rem}

The proof of the $\sigma$-extension in Proposition 1.3 is the major part of this article: we prove it by the $RP^\infty$ technique as a demonstration of the effectiveness of our method.

The rest of this paper is organized as the following.

In Section 2, we deduce Proposition 1.1 and Corollary 1.4 from Proposition 1.3. We also show the two statements in Proposition 1.3 are equivalent. In Section 3, we recall a few notations from \cite{WX1}. We also give a brief review of how to use the $RP^\infty$ technique to prove differentials and to solve extension problems. In Section 4, we present the proof of Proposition 1.3. In Section 5, we prove a lemma which is used in Section 4. The lemma gives a general connection that relates Toda brackets and extension problems in 2 cell complexes. In the Appendix, we use cell diagrams as intuition for statements of the lemmas in Section 5.

\subsection*{Acknowledgement} We thank the anonymous referee for various helpful comments. This material is based upon work supported by the National Science Foundation under Grant No. DMS-1810638.

\section{the 51-stem and some extensions}

We first establish the following lemma.

\begin{lem}
In the Adams $E_2$-page, we have the following Massey products in the 46-stem:
$$gn = \langle N, h_1, h_2\rangle = \langle N, h_2, h_1\rangle$$
\end{lem}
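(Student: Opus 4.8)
The plan is to prove the Massey product statement $gn = \langle N, h_1, h_2 \rangle = \langle N, h_2, h_1 \rangle$ entirely within the algebraic Adams $E_2$-page, which is the cohomology of the mod 2 Steenrod algebra $\mathrm{Ext}_{\mathcal{A}}^{*,*}(\mathbb{F}_2, \mathbb{F}_2)$. The key structural input I would rely on is the Massey product shuffle/juggling formulas together with the known multiplicative relations among the low-filtration classes $h_1, h_2, N, g, n$ near the 46-stem.

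First I would verify that the bracket is defined: one needs $N h_1 = 0$ and $h_1 h_2 = 0$ (respectively $N h_2 = 0$ and $h_2 h_1 = 0$) in $\mathrm{Ext}_{\mathcal{A}}$. The relation $h_1 h_2 = 0$ is one of the standard Adams relations, so the only nontrivial check is that $N$ multiplies to zero against $h_1$ and against $h_2$; I would confirm these product vanishings from the known multiplicative structure of the chart in this range (they are recorded in Isaksen's data). Once definedness is established, I would compute the bracket directly using an explicit cobar or minimal-resolution representative, or, more cleanly, by invoking the known Massey product relations that express $g$ and $n$ in terms of other brackets and then applying a shuffle. The equality of the two bracket orderings $\langle N, h_1, h_2\rangle = \langle N, h_2, h_1\rangle$ should follow from the graded commutativity of Massey products (up to sign and indeterminacy), since $h_1$ and $h_2$ are in odd total degree in the relevant sense; I would make the symmetry argument precise and check that the signs work out.

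The main obstacle I anticipate is the \emph{indeterminacy}. The Massey product $\langle N, h_1, h_2\rangle$ lives in a group whose indeterminacy is $N \cdot \mathrm{Ext}^{*,*} + \mathrm{Ext}^{*,*} \cdot h_2$ in the appropriate bidegree, and to get a clean equality with the single element $gn$ one must show this indeterminacy is trivial (or at least that it does not absorb $gn$). This is exactly the kind of subtlety the authors flag in Remark 1.5, where they note that Isaksen's contrary argument failed precisely by neglecting such indeterminacy. So I would devote careful attention to pinning down the relevant $\mathrm{Ext}$ groups in the bidegree of $gn$ and the adjacent bidegrees that contribute to the indeterminacy, and to confirming that $gn$ is the unique nonzero element in the coset.

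Practically, I expect the cleanest route is to identify a known Massey product expression already in the literature for $gn$ or for one of its factors, and then reduce the claim to a shuffle of brackets, which converts the problem into verifying a handful of product relations plus an indeterminacy computation. If no such prior expression is available, I would fall back on an explicit cobar-complex computation: represent $N$, $h_1$, $h_2$ by cocycles, solve for the bounding cochains witnessing $N h_1 = 0$ and $h_1 h_2 = 0$, and show that the resulting Massey product cocycle represents $gn$. This direct approach is the most robust but also the most computation-heavy, so I would attempt the shuffle/relation route first and reserve the cobar computation as a verification.
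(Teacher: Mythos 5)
Your proposal correctly frames the two issues (definedness and indeterminacy), but it stops short of a proof in two places, and one of them is an outright error. The error: you assert that the equality $\langle N, h_1, h_2\rangle = \langle N, h_2, h_1\rangle$ ``should follow from the graded commutativity of Massey products.'' It does not. Over $\mathbb{F}_2$ the only general symmetry of a triple bracket is reversal, $\langle a,b,c\rangle = \langle c,b,a\rangle$; a transposition of the last two entries relates genuinely different brackets, with different definedness hypotheses ($Nh_1 = 0,\ h_1h_2=0$ versus $Nh_2 = 0,\ h_2h_1=0$) and different indeterminacies. (The cyclic-sum relation only yields $0 \in \langle N,h_1,h_2\rangle + \langle N,h_2,h_1\rangle + \langle h_1,N,h_2\rangle$, so you would still be left computing a third bracket.) The paper proves the two equalities by two separate arguments; the second one needs the additional input $h_2 \cdot N = 0$, which holds because $Ext^{9,49+9} = 0$.

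The gap: your ``cleanest route'' is conditional on finding a known bracket expression for $gn$ in the literature, and your fallback is an unexecuted cobar/minimal-resolution computation in filtration $9$ of the $51$-stem; neither is carried out, so no value of the bracket is ever actually identified, and the indeterminacy question you yourself flag as the main obstacle is never settled. The missing idea, which is the heart of the paper's argument, is to work one multiplication higher rather than compute the bracket in its own bidegree: multiply by $r$ and use Bruner's machine-verified relation $gnr = mN$ in bidegree $(t-s,s)=(81,15)$ together with Tangora's Massey product $m = \langle r, h_1, h_2\rangle$. Juggling then gives $gn\cdot r = N\cdot\langle r,h_1,h_2\rangle = \langle Nr, h_1, h_2\rangle = r\cdot\langle N, h_1, h_2\rangle$, and the explicit structure of $Ext^{15,81+15} = \mathbb{Z}/2\oplus\mathbb{Z}/2$ (generated by $gnr$ and $h_1x_{14,42}$, with neither generator $h_2$-divisible and $gnr$ not $h_1$-divisible) forces zero indeterminacy, whence $gn = \langle N, h_1, h_2\rangle$; a second, different juggle using $h_2N=0$ gives $gn\cdot r = r\cdot\langle h_1,h_2,N\rangle$ and hence $gn = \langle N, h_2, h_1\rangle$ by reversal. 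Without this multiply-by-$r$ device, or some equally concrete substitute, your outline cannot close either the identification of the bracket's value or its indeterminacy.
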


\begin{proof}
By Bruner's computation \cite{Br2}, there is a relation in bidegree $(t-s,s)=(81,15)$:
$$gnr = mN.$$
We have $Ext^{15,81+15} = \mathbb{Z}/2\oplus\mathbb{Z}/2$, generated by $gnr$ and $h_1x_{14,42}$. Moreover, the element $gnr$ is not divisible by $h_1$, and neither of the generators is divisible by $h_2$.

By Tangora's computation \cite{Tan1}, we have a Massey product in the Adams $E_2$-page,
$$m = \langle r, h_1, h_2\rangle.$$

Therefore,
$$gn \cdot r = m\cdot N = N \cdot \langle r, h_1, h_2\rangle = \langle N\cdot r, h_1, h_2\rangle = r \cdot \langle N, h_1, h_2\rangle$$
with zero indeterminacy. This implies
$$gn = \langle N, h_1, h_2\rangle.$$

Because of the relation $h_2 \cdot N=0$ in $Ext^{9,49+9} = 0$, we also have $$gn \cdot r = m\cdot N = \langle r, h_1, h_2\rangle\cdot N = r \cdot  \langle h_1, h_2, N\rangle.$$
This implies
$$gn = \langle N, h_2, h_1\rangle.$$
\end{proof}

Based on Proposition 1.3, we prove part $(1)$ of Corollary 1.4.

\begin{proof}
By Proposition 1.3, $N$ detects the homotopy class $\sigma^2\{d_1\}$. Then the Massey product
$$gn =\langle N, h_2, h_1\rangle$$
and Moss's theorem \cite{Mos} imply that $gn$ detects a homotopy class that is contained in the Toda bracket
$$\langle \sigma^2\{d_1\}, \nu, \eta\rangle.$$
The indeterminacy of this Toda bracket is
$$\eta \cdot \pi_{50} + \sigma^2\{d_1\} \cdot \pi_5 = \eta \cdot \pi_{50},$$
since $\pi_5=0$. Shuffling this bracket, we have
$$\langle \sigma^2\{d_1\}, \nu, \eta\rangle \supseteq \sigma\{d_1\} \cdot \langle \sigma, \nu, \eta\rangle =0,$$
since $\langle \sigma, \nu, \eta\rangle \subseteq \pi_{12} =0$.

Therefore, $gn$ detects a homotopy class that lies in the indeterminacy, and hence is divisible by $\eta$.

For filtration reasons, the only possibilities are $C$ and $h_5c_1$. However, Lemma 4.2.51 of \cite{Isa1} states that there is no $\eta$-extension from $h_5c_1$ to $gn$. Therefore, we must have an $\eta$-extension from $C$ to $gn$.
\end{proof}

Based on Proposition 1.3, we prove part $(2)$ of Corollary 1.4.

\begin{proof}
By Proposition 1.3, $N$ detects the homotopy class $\sigma^2\{d_1\}$. Then the Massey product
$$gn =\langle N, h_1, h_2\rangle$$
and Moss's theorem \cite{Mos} imply that $gn$ detects a homotopy class that is contained in the Toda bracket
$$\langle \sigma^2\{d_1\}, \eta, \nu\rangle.$$
The indeterminacy of this Toda bracket is
$$\nu \cdot \pi_{48} + \sigma^2\{d_1\} \cdot \pi_5 = \nu \cdot \pi_{48},$$
since $\pi_5=0$. Shuffling this bracket, we have
$$\langle \sigma^2\{d_1\}, \eta, \nu\rangle \supseteq \sigma \cdot \langle \sigma\{d_1\}, \eta, \nu\rangle = \sigma\{d_1\} \cdot \langle \eta, \nu, \sigma\rangle =0,$$
since $\langle \eta, \nu, \sigma\rangle \subseteq \pi_{12} =0$.

Therefore, $gn$ detects a homotopy class that lies in the indeterminacy, and hence is divisible by $\nu$.

For filtration reasons, the only possibility is $h_2h_5d_0$, which completes the proof.
\end{proof}

Now we prove part $(3)$ of Corollary 1.4, and Proposition 1.1.

\begin{proof}
Lemma 4.2.31 from Isaksen's computation \cite{Isa1} states that the 2-extension from $h_0h_3g_2$ to $gn$ is equivalent to the $\nu$-extension from $h_2h_5d_0$ to $gn$. This proves Proposition 1.1.

It is clear that Proposition 1.1 is equivalent to part $(3)$ of Corollary 1.4, since $\sigma$ is detected by $h_3$, and $\sigma^2\theta_4$ is detected by $h_0^2g_2$. (See \cite{BMT,Isa1} for the second fact.)
\end{proof}

In the following Lemma 2.2, we show that the two statements in Proposition 1.3 are equivalent.

\begin{lem}
There is a $\sigma$-extension from $h_3d_1$ to $N$ if and only if there is an $\eta$-extension from $h_1g_2$ to $N$.
\end{lem}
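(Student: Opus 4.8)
The plan is to reduce both statements to a comparison of two explicit homotopy classes in $\pi_{46}$ and then to identify those classes up to Adams filtration. First I would record the two permanent cycles $\{d_1\}\in\pi_{32}$ and $\{g_2\}\in\pi_{44}$. Since $h_3d_1\ne 0$ in $Ext$ and sits one filtration above $d_1$, the class $\sigma\{d_1\}$ is detected by $h_3d_1$; hence statement $(1)$ is equivalent, at the level of detection by $N$, to the assertion that $\sigma^2\{d_1\}$ is detected by $N$. In the same way $h_1g_2$ detects $\eta\{g_2\}$, so statement $(2)$ is equivalent to the assertion that $\eta^2\{g_2\}$ is detected by $N$. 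It therefore suffices to prove that $\sigma^2\{d_1\}$ and $\eta^2\{g_2\}$ agree modulo the subgroup of $\pi_{46}$ of Adams filtration strictly greater than that of $N$: if their difference is not detected by $N$, then one of them is detected by $N$ precisely when the other is, and the equivalence follows in both directions (including the case where neither product is detected by $N$ and both extensions fail).

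To produce this identification I would work in the Adams $E_2$-page, exactly as in Lemma 2.1. The elements $h_3^2d_1$ and $h_1^2g_2$ are the leading terms of $\sigma^2\{d_1\}$ and $\eta^2\{g_2\}$, and the key input is an algebraic relation between them drawn from Bruner's machine computation: either a direct relation $h_3^2d_1=h_1^2g_2$, in the event that the two classes occupy a common bidegree, or, failing that, a Massey product relation of the Tangora type (in the spirit of $m=\langle r,h_1,h_2\rangle$ used in Lemma 2.1) linking $g_2$ to $d_1$. Feeding such a relation into Moss's convergence theorem realizes it in homotopy and places $\sigma^2\{d_1\}$ and $\eta^2\{g_2\}$ inside a common Toda bracket. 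A shuffle of the kind used in the proof of Corollary 1.4, together with the vanishing of the relevant low-dimensional brackets such as $\langle\sigma,\nu,\eta\rangle\subseteq\pi_{12}=0$, should then force the two products to coincide modulo higher filtration.

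The main obstacle is controlling the indeterminacy of the bracket. This is precisely the subtlety flagged in Remark 2.3: the Massey product relating these classes has nonzero indeterminacy, and the comparison is valid only once one checks that this indeterminacy is carried by classes of Adams filtration strictly above $N$, equivalently that it is invisible to $N$. Concretely, I would have to pin down the relevant contributions from $\pi_{45}$ and $\pi_{46}$ to the bracket, determine their detecting elements, and verify that every ambiguity lies above the filtration of $N$. Getting this bookkeeping right, rather than any single bracket computation, is the crux; it is exactly the place where a careless accounting of indeterminacy would (as for Isaksen) yield the opposite conclusion.
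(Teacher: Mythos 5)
Your reduction of the two statements to ``$\sigma^2\{d_1\}$ is detected by $N$'' and ``$\eta^2\{g_2\}$ is detected by $N$'' is fine and matches how the paper uses the lemma. The gap is in your proposed engine for comparing these two products. An $E_2$-relation between $h_3^2d_1$ and $h_1^2g_2$ does exist --- indeed $h_3^2d_1 = h_1h_3e_1 = h_1^2g_2$, by the relations $h_3d_1=h_1e_1$ and $h_3e_1=h_1g_2$ --- but it cannot do the work you want. Moss's theorem converges \emph{Massey products} in an $E_r$-page to Toda brackets; it does not ``realize'' product relations in homotopy. Moreover, for the hidden extension question to be non-vacuous, the common class $h_3^2d_1=h_1^2g_2$ must vanish in the $E_\infty$-page (otherwise both $\sigma^2\{d_1\}$ and $\eta^2\{g_2\}$ would be detected in filtration $6$ and neither extension could hold, contradicting Proposition 1.3). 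A relation among classes that die by $E_\infty$ puts no constraint on the corresponding homotopy products beyond ``filtration $\geq 7$''; in particular it cannot show that $\sigma^2\{d_1\}-\eta^2\{g_2\}$ has filtration greater than $8$. Hidden extensions are exactly the information that the ring structure of $E_\infty$ fails to record, so no amount of indeterminacy bookkeeping rescues this route.

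What is missing is the mediating element $e_1$ together with Bruner's differential $d_3(e_1)=h_1t=h_2^2n$, which is the paper's actual mechanism. That differential is what converts the two dead products into live Massey product data: in the Adams $E_4$-page one gets
$$h_3d_1 = h_1e_1 = \langle h_2n, h_2, h_1\rangle, \qquad h_1g_2 = h_3e_1 = \langle h_3, h_2n, h_2\rangle,$$
and Moss's theorem (now applicable, since these are genuine Massey products in an $E_r$-page) shows that $h_3d_1$ detects an element of the Toda bracket $\langle \nu\{n\},\nu,\eta\rangle$ and $h_1g_2$ detects an element of $\langle \sigma,\nu\{n\},\nu\rangle$. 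The lemma then follows from the shuffle
$$\sigma\cdot\langle \nu\{n\},\nu,\eta\rangle = \langle \sigma,\nu\{n\},\nu\rangle\cdot\eta,$$
which makes the $\sigma$-extension on $h_3d_1$ and the $\eta$-extension on $h_1g_2$ two descriptions of the same set of classes in $\pi_{46}$; no vanishing of auxiliary brackets is needed (your citation of $\langle\sigma,\nu,\eta\rangle\subseteq\pi_{12}=0$ belongs to the proof of Corollary 1.4, not here). So the crux is not the indeterminacy accounting you flag at the end, but identifying $e_1$ and Bruner's differential as the bridge from the $E_2$-relations to a common Toda bracket --- precisely the step your proposal leaves as ``a Massey product relation of the Tangora type,'' unspecified.
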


\begin{proof}
First note that there are relations in $Ext$:
$$h_3d_1 = h_1e_1, h_3e_1 = h_1g_2.$$
By Bruner's differential \cite[Theorem 4.1]{Br1}
$$d_3(e_1) = h_1t = h_2^2n,$$
we have Massey products in the Adams $E_4$-page
$$h_3d_1 = h_1e_1 = \langle h_2n, h_2, h_1\rangle, \ h_1g_2 = h_3e_1 = \langle h_3, h_2n, h_2\rangle.$$
Then Moss's theorem implies that they converge to Toda brackets
$$\langle \nu\{n\}, \nu, \eta\rangle, \ \langle \sigma, \nu\{n\}, \nu \rangle.$$
Therefore, the lemma follows from the shuffling
$$\sigma \cdot \langle \nu\{n\}, \nu, \eta\rangle = \langle \sigma, \nu\{n\}, \nu \rangle \cdot \eta.$$
\end{proof}

We give a remark on the two $\eta$-extensions we proved.

\begin{rem}
In Lemma 4.2.47 and Lemma 4.2.52 of \cite{Isa1}, Isaksen showed that there are no $\eta$-extensions from $h_1g_2$ to $N$ or from $C$ to $gn$. Both arguments are based the statement of Lemma 3.3.45 of \cite{Isa1}, whose proof implicitly studied the following motivic Massey product
$$\langle h_1^2, Ph_1h_5c_0, c_0\rangle \ni Ph_1^3h_5e_0$$
in the 59-stem of the motivic Adams $E_3$-page, which therefore converges to a motivic Toda bracket. However, in the motivic Adams $E_3$-page, the element $Ph_1^3h_5e_0$ is in the indeterminacy of this motivic Massey product, since $Ph_1h_5e_0$ is present in the motivic $E_3$-page (it supports a $d_3$ differential). Therefore, we have
$$\langle h_1^2, Ph_1h_5c_0, c_0\rangle = \{Ph_1^3h_5e_0, 0\}$$
instead. The statement of Moss's theorem gives us the convergence of \emph{only one} permanent cycle in the motivic Massey product, therefore, in this case, it is inconclusive.
\end{rem}

\section{The method and notations}

In this section, we recall a few notations from \cite{WX1} and set up terminology that will be used in Section 4.

\begin{notation}
All spectra are localized at the prime 2. Suppose $Z$ is a spectrum. Let $Ext(Z)$ denote the Adams $E_2$-page $Ext_{A}(\mathbb{F}_2, H_*(Z; \mathbb{F}_2))$ that converges to the 2-primary homotopy groups of $Z$. Here $A$ is the mod 2 dual Steenrod algebra.

We now introduce some spectral sequence terminologies. A permanent cycle is a class that does not support any nontrivial differential. A surviving cycle is a permanent cycle that is also not the target of any differential.  


For spectra, let $S^0$ be the sphere spectrum, and $P_1^\infty$ be the suspension spectrum of $RP^\infty$. In general, we use $P_n^{n+k}$ to denote the suspension spectrum of $RP^{n+k}/RP^{n-1}$.

Let $\alpha$ be a class in the stable homotopy groups of spheres. We use $C \alpha$ to denote the cofiber of $\alpha$.
\end{notation}

\begin{defn}
Let $A$, $B$, $C$ and $D$ be CW spectra, $i$ and $q$ be maps
\begin{displaymath}
    \xymatrix{
A \ar@{^{(}->}[r]^-i & B, & B \ar@{->>}[r]^-q & C
    }
\end{displaymath}
We say that $(A, i)$ is an $H\mathbb{F}_2$-subcomplex of $B$, if the map $i$ induces an injection on mod 2 homology. We denote an $H\mathbb{F}_2$-subcomplex by a hooked arrow as above.

We say that $(C, q)$ is an $H\mathbb{F}_2$-quotient complex of $B$, if the map $q$ induces a surjection on mod 2 homology. We denote an $H\mathbb{F}_2$-quotient complex by a double headed arrow as above.

When the maps involved are clear in the context, we also say $A$ is an $H\mathbb{F}_2$-subcomplex of $B$, and $C$ is an $H\mathbb{F}_2$-quotient complex of $B$.

Furthermore, we say $D$ is an $H\mathbb{F}_2$-subquotient of $B$, if $D$ is an $H\mathbb{F}_2$-subcomplex of an $H\mathbb{F}_2$-quotient complex of $B$, or an $H\mathbb{F}_2$-quotient complex of an $H\mathbb{F}_2$-subcomplex of $B$.
\end{defn}

One example is that $S^1, S^3, S^7$ are $H\mathbb{F}_2$-subcomplexes of $P_1^\infty$, due to the solution of the Hopf invariant one problem. Another example is that $\Sigma^7 C \eta$ is an $H\mathbb{F}_2$-subcomplex of $P_7^9$.

We use the following way to denote the elements in the Adams $E_2$-page of $P_1^\infty$ and its $H\mathbb{F}_2$-subquotients. One way to compute $Ext(P_1^\infty)$ is to use the algebraic Atiyah-Hirzebruch spectral sequence.

\begin{displaymath}
    \xymatrix{
  E_1 = \bigoplus_{n=1}^\infty Ext(S^n) \ar@{=>}[r] & Ext(P_1^\infty)
    }
\end{displaymath}

\begin{notation}
We denote any element in $Ext(S^n)$ by $a[n]$, where $a\in Ext(S^0)$, and $n$ indicates that it comes from $Ext(S^n)$. We will abuse notation and write the same symbol $a[n]$ for an element of $Ext(P_1^\infty)$ detected by the element $a[n]$ of the Atiyah-Hirzebruch $E_\infty$-page. Thus, there is indeterminacy in the notation $a[n]$ that is detected by Atiyah-Hirzebruch $E_\infty$ elements in lower filtration. When $a[n]$ is the element of lowest Atiyah-Hirzebruch filtration in the Atiyah-Hirzebruch $E_\infty$-page in a given bidegree $(s,t)$, then $a[n]$ also is a well-defined element of $Ext(P_1^\infty)$.

We use similar notations for homotopy classes.
\end{notation}

\begin{rem}
In \cite{WX2}, we computed differentials in the algebraic Atiyah-Hirzebruch spectral sequence that converges to the Adams $E_2$-page of $P_1^\infty$ in the range of $t<72$.


By truncating the algebraic Atiyah-Hirzebruch spectral sequence for $P_1^\infty$,
one can also read off information about $Ext(P_n^{n+k})$. For details, see \cite{WX2}.
\end{rem}

\begin{rem}
Despite the indeterminacy in Notation 3.3, there is a huge advantage of it. Suppose $f: Q\rightarrow Q'$ is a map between two $H\mathbb{F}_2$-subquotients of $P_1^\infty$, and there exists an element $a[n]$ which is a generater of both $Ext^{s,t}(Q)$ and $Ext^{s,t}(Q')$ for some bidegree $(s,t)$ (this implies both $Q$ and $Q'$ have a cell in dimension $n$). We must have that, with the right choices, $a[n]$ in $Ext^{s,t}(Q)$ maps to $a[n]$ in $Ext^{s,t}(Q')$. This property follows from the naturality of the algebraic Atiyah-Hirzebruch spectral sequence.

\begin{displaymath}
    \xymatrix{
  \underset{i \in I}\bigoplus Ext(S^i) \ar@{=>}[dd] \ar[rr] & & \underset{i \in I'}\bigoplus Ext(S^i) \ar@{=>}[dd] \\
  & & \\
  Ext(Q) \ar[rr] & & Ext(Q') \\
 a[n] \ar@{|->}[rr] & & a[n]
    }
\end{displaymath}
\end{rem}

\section{the $\sigma$-extension on $h_3d_1$}

In this section, we prove part $(1)$ of Proposition 1.3. The proof can be summarized in the following ``road map" with 4 main steps:

\begin{displaymath}
    \xymatrix{
  S^0 & & & S^7 \ar@{^{(}->}[dd] \ar@{-->}[lll]_\sigma \ar@{-->}[llldddd] & & N & &  h_3d_1[7] \\
  & & & & & & & \\
  P_1^\infty \ar[uu] & & & \Sigma^7 C\eta \ar@{^{(}->}[dd] & & h_1t[9] \ar@{|->}[uu]
 & &  h_2 \cdot h_2n[9] = h_1t[9] \ar@{|->}[dd] \\
  & & & & & & & \\
  P_1^9 \ar@{^{(}->}[uu] \ar@{->>}[rrr] & & & P_7^9 & & h_1t[9] \ar@{|->}[uu] \ar@{|->}[rr] & & h_1t[9]
  }
\end{displaymath}

Here the elements in the right side of the ``road map" are elements in the 46-stem of the $E_\infty$-page of the Adams spectral sequences of the spectra in the corresponding positions.

\begin{enumerate}

\item \textbf{\underline{Step 1}}: We show that the element $h_1t[9]$ is a permanent cycle in the Adams spectral sequence of $\Sigma^7 C\eta$, and hence a permanent cycle in the Adams spectral sequence of $P_7^9$.
This is stated as Proposition 4.3.

\item \textbf{\underline{Step 2}}: Under the inclusion map $S^7 \hookrightarrow \Sigma^7 C\eta$, we show that the element $h_1t[9]$ detects the image of $\sigma\{d_1\}[7]$ in $\pi_{46}(\Sigma^7 C\eta)$. By naturality, the same statement is true, after we further map it to $\pi_{46}(P_7^9)$.
This is stated as Proposition 4.6.

\item \textbf{\underline{Step 3}}: Under the any map $S^7 \hookrightarrow P_1^9$ lifting the inclusion $S^7 \hookrightarrow P_7^9$, we show that the element $h_1t[9]$ in $Ext(P_1^9)$ detects the image of $\sigma\{d_1\}[7]$ in $\pi_{46}(P_1^9)$.
This is stated as Proposition 4.7.

\item \textbf{\underline{Step 4}}: Using the inclusion map $P_1^9 \rightarrow P_1^\infty$ and the transfer map $P_1^\infty \rightarrow S^0$, we push forward the element $h_1t[9]$ in the $E_\infty$-page of $P_1^9$ to the element $N$ in the $E_\infty$-page of $S^0$. Since the composition
\begin{displaymath}
    \xymatrix{
S^7 \ar@{^{(}->}[r] & P^9_1 \ar[r] & P_1^\infty \ar[r] & S^0
    }
\end{displaymath}
is just $\sigma$, we have the desired $\sigma$-extension from $h_3d_1$ to $N$ in the Adams spectral sequence for $S^0$.
\end{enumerate}

\begin{rem}
Step 2 is the essential step. Intuitively, it comes from the zigzag of the following two differentials:
$$d_3(e_1) = h_1t$$
in the Adams spectral sequence of $S^0$, and
$$d_2(e_1[9]) = h_1e_1[7] = h_3d_1[7]$$
in the algebraic Atiyah-Hirzebruch spectral sequence of $\Sigma^7 C\eta$ that comes from the $\eta$-attaching map. Here
$$h_1e_1=h_3d_1$$
is a relation in $Ext$. This zigzag suggested that we consider the possibility that $h_1t[9]$ detects $\sigma\{d_1\}[7]$ in $\pi_{46}(\Sigma^7 C\eta)$.
\end{rem}

We start with Step 1. Proposition 4.3 is a consequence of the following lemma.

\begin{lem}
The element	$h_2n[9]$ is a permanent cycle in $\Sigma^7 C \eta$, which detects a homotopy class that maps to $\nu\{n\}[9]$ under the quotient map
\begin{displaymath}
    \xymatrix{
\Sigma^7 C \eta \ar@{->>}[r] & S^9.
    }
\end{displaymath}
\end{lem}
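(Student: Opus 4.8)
The plan is to produce the desired homotopy class by lifting across the cofiber sequence of $\Sigma^7 C\eta$, exploiting the relation $\eta\nu = 0$, and then to identify $h_2n[9]$ as its detecting class by naturality of the Adams spectral sequence under the collapse map $q$. First I would record the top-cell input: on $S^0$ there is a class $\{n\}\in\pi_{31}$ detected by $n$, and the product $\nu\{n\}\in\pi_{34}$ is detected by $h_2n$, a fact standard from Isaksen's computations. Suspending nine times, $h_2n[9]$ is a permanent cycle in the Adams spectral sequence of $S^9$ detecting $\nu\{n\}[9]$; this is the class we wish to realize on $\Sigma^7 C\eta$.

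Second, I would lift $\nu\{n\}[9]$ across the cofiber sequence
\[ S^7 \xrightarrow{\,i\,} \Sigma^7 C\eta \xrightarrow{\,q\,} S^9 \xrightarrow{\,\eta\,} S^8, \]
whose connecting map is the attaching map $\eta$. The obstruction to lifting $\nu\{n\}[9]$ through $q_*$ is $\eta\cdot\nu\{n\}[9]$, which vanishes because $\eta\nu = 0$ in the stable stems. Hence there is a class $y\in\pi_{43}(\Sigma^7 C\eta)$ with $q_*(y) = \nu\{n\}[9]$, giving the homotopy-level content of the lemma immediately.

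Third, the crux, I would show that $h_2n[9]$ detects $y$. By Remark 3.5, the map $q$ carries $h_2n[9]\in Ext(\Sigma^7 C\eta)$ to $h_2n[9]\in Ext(S^9)$, which detects $q_*(y)=\nu\{n\}[9]$ at Adams filtration $f_0 = Af(h_2n)$. Since $q_*$ never lowers Adams filtration, $y$ has filtration at most $f_0$; if it were strictly smaller, the leading term of $y$ would lie in $\ker q_* = \operatorname{im}\bigl(i_*\colon Ext(S^7)\to Ext(\Sigma^7 C\eta)\bigr)$, i.e. would come from the bottom cell, and I could subtract the corresponding element of $i_*\pi_{43}(S^7)$ from $y$ to raise its filtration. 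Iterating, I may arrange $Af(y)=f_0$ with a detecting element mapping to $h_2n[9]$ on $S^9$. Reading off the relevant bidegree of $Ext(\Sigma^7 C\eta)$ from the truncated algebraic Atiyah--Hirzebruch spectral sequence of \cite{WX2}, I would verify that the only such class is $h_2n[9]$ itself (after clearing any bottom-cell summand by a final adjustment of $y$), so $h_2n[9]$ detects $y$. Being the detecting class of a genuine homotopy class, $h_2n[9]$ is then automatically a nonzero permanent cycle.

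The main obstacle is this last identification. The inputs $\eta\nu = 0$ (existence of the lift) and the naturality of Remark 3.5 are immediate, but pinning down $h_2n[9]$ requires the bookkeeping in $Ext(\Sigma^7 C\eta)$ to rule out any lower-filtration bottom-cell class surviving as the detecting element of the lift after the permitted adjustments. This is precisely where the computation of \cite{WX2} does the real work, and I would expect the filtration analysis near this bidegree to be the only delicate point of the argument.
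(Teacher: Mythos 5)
Your Steps 1 and 2 coincide with the paper's proof: the lift $y$ of $\nu\{n\}[9]$ exists because $\eta\nu=0$, and the problem becomes identifying its detecting element. The genuine gap is in Step 3, in your dichotomy. You assert that if the Adams filtration of $y$ were strictly less than $6$, its leading term would lie in $\ker\bigl(q_\sharp\colon Ext(\Sigma^7C\eta)\to Ext(S^9)\bigr)=\mathrm{im}\,(i_\sharp)$, hence be a bottom-cell class. But naturality only forces the image of the leading term to vanish in the \emph{$E_\infty$-page} of $S^9$, and that can happen in two ways: the image is zero already in $Ext(S^9)$ (your case), or the image is a nonzero element of $Ext(S^9)$ that is killed by an Adams differential of $S^9$. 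The identification of the kernel with $\mathrm{im}\,(i_\sharp)$ is valid only on the $E_2$-page. Because you collapse the two cases into the first, your argument dismisses every top-cell class of filtration less than $6$ for free, and your remaining bookkeeping hunts only for bottom-cell classes --- of which there are none in this bidegree (the sole bottom-cell class in the 43-stem of $Ext(\Sigma^7C\eta)$ is $t[7]$, in filtration $6$). So your proof terminates without ever engaging the actual candidates.

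Those candidates are exactly where the paper's proof does its work: by the long exact sequence in $Ext$, the filtration $\le 5$ part of the 43-stem of $Ext(\Sigma^7C\eta)$ consists of $h_2h_5[9]$, $h_0h_2h_5[9]$ and $h_0^2h_2h_5[9]$, all top-cell classes of the kind your dichotomy skips. The paper rules out $h_2h_5[9]$ because $d_3(h_2h_5[9])=h_0p[9]\neq 0$ in $S^9$, so by naturality it cannot even be a permanent cycle in $\Sigma^7C\eta$; and it rules out $h_0h_2h_5[9]$ and $h_0^2h_2h_5[9]$ by showing that they survive and detect lifts of $\eta$-annihilated representatives of $\{h_0h_2h_5\}[9]$ and $\{h_0^2h_2h_5\}[9]$, so that any class they detect maps under the quotient to a class of filtration $3$ or $4$, never to $\nu\{n\}[9]$, which has filtration $6$. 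Note the sphere-level input this requires --- that none of $h_2h_5$, $h_0h_2h_5$, $h_0^2h_2h_5$ is a boundary in the Adams spectral sequence of $S^0$ --- an input your argument never touches. That is the telltale sign of the gap: your argument would go through verbatim in a hypothetical world where $h_0h_2h_5$ were a boundary, yet in that world $h_0h_2h_5[9]$ could detect $y$ and the lemma could fail. To repair Step 3, replace your dichotomy by the trichotomy above and carry out the case-by-case analysis of the paper's Table 1.
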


\begin{proof}
The cofiber sequence
\begin{displaymath}
    \xymatrix{
 S^7 \ar@{^{(}->}[r]^i & \Sigma^7 C \eta \ar@{->>}[r]^p & S^9 \ar[r]^{\eta} & S^8
    }
\end{displaymath}
gives us a long exact sequence of homotopy groups
\begin{displaymath}
    \xymatrix{
  \pi_{43}(S^7) \ar[r]^{i_*} & \pi_{43}(\Sigma^7 C \eta) \ar[r]^{p_*} & \pi_{43}(S^9) \ar[r]^-{\eta} & \pi_{43}(S^8).
    }
\end{displaymath}

Since $h_2n$ detects $\nu\{n\}$, and
$$\eta\cdot\nu\{n\}=0,$$
there is an element $\alpha$ in $\pi_{43}(\Sigma^7 C \eta)$ such that $p_* \alpha = \nu\{n\}[9]$.

The element $h_2n[9]$ in $Ext(S^9)$ has Adams filtration 6, therefore by naturality, if it were not detected by $h_2n[9]$ in $Ext(\Sigma^7 C \eta)$, it would be detected by an element with Adams filtration at most 5.

From the same cofiber sequence, we have a short exact sequence on cohomology
\begin{displaymath}
    \xymatrix{
  0 \ar[r] & H^\ast(S^{9}) \ar[r]^{p^\ast} & H^\ast(\Sigma^{7}C\eta) \ar[r]^{i^\ast} & H^\ast(S^{7}) \ar[r] & 0
    }
\end{displaymath}
and therefore a long exact sequence of $Ext$ groups
\begin{displaymath}
    \xymatrix{
  Ext^{s-1,t-1}(S^{8}) \ar[r]^-{h_1} & Ext^{s,t}(S^{7}) \ar[r]^-{i_{\sharp}} & Ext^{s,t}(\Sigma^{7}C\eta) \ar[r]^{p_\sharp} & Ext^{s,t}(S^{9}).
    }
\end{displaymath}
This gives us the Adams $E_2$-page of $\Sigma^{7}C\eta$ in the 42 and 43 stems for $s\leq 6$ in Table 1.

\begin{table}[h]
\caption{The Adams $E_2$-page of $\Sigma^{7}C\eta$ in the 42 and 43 stems for $s\leq 6$}
\centering
\begin{tabular}{ l l l }
$s\backslash t-s$ & 42 & 43 \\ [0.5ex] 
\hline 
6 & & $h_2n[9]$\\
  & & $t[7]$\\ \hline
5 & $h_0p[9]$ & \\
  & $h_2d_1[7]$ &\\ \hline
4 & $p[9]$ & $h_0^2h_2h_5[9]$ \\ \hline
3 & & $h_0h_2h_5[9]$\\ \hline
2 & & $h_2h_5[9]$\\
\end{tabular}
\label{Ceta}
\end{table}
The element $h_2h_5[9]$ must support a nontrivial differential, since its image $p_\sharp(h_2h_5[9])$ supports a $d_3$ differential that kills $h_0p[9]$ in the Adams spectral sequence of $S^9$.

The elements $h_0h_2h_5[9]$ and $h_0^2h_2h_5[9]$ survive and detect homotopy classes that map to $\{h_0h_2h_5\}[9]$ and $\{h_0^2h_2h_5\}[9]$ in $\pi_{43}(S^9)$. In fact, since there is no $\eta$-extension on $h_0h_2h_5$ and $h_0^2h_2h_5$, we can choose homotopy classes in $\pi_{43}(S^9)$, which are detected by $h_0h_2h_5[9]$ and $h_0^2h_2h_5[9]$ and are zero after multiplying by $\eta$. Therefore, they have nontrivial pre-images under the map $p_*$ in the long exact sequence of homotopy groups. For filtration reasons, their pre-images must be detected by $h_0h_2h_5[9]$ and $h_0^2h_2h_5[9]$ in the Adams spectral sequence of $\Sigma^7 C\eta$.

Therefore, the only possibility left is $h_2n[9]$, which completes the proof.
\end{proof}

We prove Proposition 4.3 in Step 1.

\begin{prop}
The elements $h_2n[9]$ and $h_1t[9]$ are permanent cycles in the Adams spectral sequence of $\Sigma^7 C\eta$, and hence also in that of $P_7^9$. 
\end{prop}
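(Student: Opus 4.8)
The plan is to deduce both assertions from the preceding lemma by combining the module structure of the Adams spectral sequence of $\Sigma^7 C\eta$ over that of $S^0$ with naturality. The first claim, that $h_2n[9]$ is a permanent cycle in $\Sigma^7 C\eta$, is precisely the content of the preceding lemma, so nothing further is needed there. Everything else is built on top of this single fact, so the genuine labor of Step 1 is already discharged by the lemma; the remaining argument is short.

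For $h_1t[9]$ I would start from the $Ext(S^0)$ relation $h_1t = h_2^2n$ (the same relation that appears in Bruner's differential $d_3(e_1) = h_1t = h_2^2n$). Placing this on the $9$-cell and using the $Ext(S^0)$-action on $Ext(\Sigma^7 C\eta)$ gives
\[
h_1t[9] = h_2 \cdot \left( h_2n[9] \right).
\]
The mechanism I would invoke is that the Adams spectral sequence of $\Sigma^7 C\eta$ is a module over that of $S^0$, so its differentials satisfy the Leibniz rule with respect to the action of permanent cycles coming from the sphere. Since $h_2$ is a permanent cycle (it detects $\nu$) and $h_2n[9]$ is a permanent cycle by the preceding lemma, the product $h_2 \cdot (h_2n[9]) = h_1t[9]$ supports no nontrivial differential and is therefore a permanent cycle in $\Sigma^7 C\eta$.

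For the final clause I would use naturality. Because $\Sigma^7 C\eta$ is an $H\mathbb{F}_2$-subcomplex of $P_7^9$, the inclusion induces a map of Adams spectral sequences commuting with all differentials, hence sending permanent cycles to permanent cycles; and by Remark 3.5 this map carries the classes $h_2n[9]$ and $h_1t[9]$ of $Ext(\Sigma^7 C\eta)$ to the like-named classes of $Ext(P_7^9)$. Thus both are permanent cycles in $P_7^9$ as well.

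I do not expect a serious obstacle, precisely because the hard analysis of the chart in the $42$- and $43$-stems was done in the preceding lemma. The one point requiring care is the identification $h_1t[9] = h_2 \cdot (h_2n[9])$ in $Ext(\Sigma^7 C\eta)$: in view of the indeterminacy built into Notation 3.3, I would confirm that multiplying the $9$-cell class $h_2n[9]$ by $h_2$ yields exactly $h_1t[9]$ rather than a correction term of lower Atiyah--Hirzebruch filtration, which amounts to checking that $h_1t[9]$ is the lowest-filtration class in its bidegree so that the notation is unambiguous and the $Ext(S^0)$-action respects the cell placement.
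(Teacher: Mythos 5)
Your proposal is correct and follows essentially the same route as the paper: the paper's proof likewise cites the relation $h_2\cdot h_2n = h_1t$ in $Ext$, concludes that $h_1t[9]$ is a product of permanent cycles (your Leibniz/module-structure argument is just the expanded version of this), and invokes naturality of Adams spectral sequences for the passage to $P_7^9$. Your extra check on the Atiyah--Hirzebruch indeterminacy of the notation is careful but not something the paper dwells on.
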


\begin{proof}
We have a relation in $Ext$:
$$h_2 \cdot h_2n = h_1t.$$
Therefore, $h_1t[9]$ is product of permanent cycles. The second claim follows from the naturality of the Adams spectral sequences.
\end{proof}

For Step 2, we first show the following lemma.

\begin{lem}
The element $h_1t[9]$ is not a boundary in the Adams spectral sequences of $\Sigma^7 C \eta$ and $P_7^9$.
\end{lem}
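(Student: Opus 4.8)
The plan is to argue by contradiction. Since $h_1t[9]$ has Adams filtration $7$ and lies in the $46$-stem of $Ext(\Sigma^7C\eta)$, any differential hitting it is a $d_r$ with $r\geq 2$ whose source $x$ lies in the $47$-stem in filtration $7-r\leq 5$. The whole argument runs through the cofiber sequence
$$S^7 \xrightarrow{\ i\ } \Sigma^7 C\eta \xrightarrow{\ p\ } S^9 \xrightarrow{\ \eta\ } S^8$$
already used in Lemma 4.1, and I would split the analysis of $x$ according to whether $p_\sharp x$ vanishes. The same argument then applies to $P_7^9$ by naturality along the quotient map $\Sigma^7C\eta \twoheadrightarrow P_7^9$ (or directly, since $\Sigma^7C\eta$ is an $H\mathbb{F}_2$-subcomplex of $P_7^9$).

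The decisive case is $p_\sharp x\neq 0$. Because $p$ is a map of spectra, differentials are natural, and $p_\sharp(h_1t[9])=h_1t[9]\neq 0$ in $Ext(S^9)$; hence $d_r(p_\sharp x)=h_1t[9]$ in the Adams spectral sequence of $S^9$. Since $Ext(S^9)$ is the $9$-fold suspension of $Ext(S^0)$, Bruner's differential $d_3(e_1)=h_1t$ shows that the only class hitting $h_1t[9]$ in $S^9$ is $e_1[9]$, via $d_3$; so $r=3$ and $p_\sharp x=e_1[9]$ on the $E_3$-page. This is incompatible with the long exact sequence: the connecting homomorphism $\delta\colon Ext(S^9)\to Ext(S^7)$ is $h_1$-multiplication, and the $Ext$-relation $h_1e_1=h_3d_1$ gives $\delta(e_1[9])=h_1e_1[7]=h_3d_1[7]\neq 0$, so that $e_1[9]\notin\ker\delta=\mathrm{im}(p_\sharp)$ already on $E_2$. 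This is exactly the failure of $e_1[9]$ to lift to $\Sigma^7C\eta$ anticipated in Remark 4.2, and it is the heart of the lemma.

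It remains to treat the case $p_\sharp x=0$, in which $x$ comes from the bottom cell. Here $p_\sharp(h_1t[9])\neq 0$ forces $h_1t[9]\notin\mathrm{im}(i_\sharp)=\ker(p_\sharp)$, so any differential induced from $S^7$ by naturality of $i$ lands in $\mathrm{im}(i_\sharp)$ and cannot equal $h_1t[9]$. What remains is to exclude a cell-crossing differential from a bottom-cell class that is a permanent cycle of $S^7$. I would settle this by listing, via the $Ext$ long exact sequence, the finitely many classes of $Ext(\Sigma^7C\eta)$ in the $47$-stem and filtrations $2\leq s\leq 5$—equivalently the relevant classes of the $38$- and $40$-stems of $Ext(S^0)$—and checking each directly.

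The main obstacle is not the naturality argument to $S^9$, which is clean, but the passage from the $E_2$ long exact sequence (where exactness holds) to the page $E_r$ on which the hypothetical differential occurs, since exactness may fail on later pages. I would control this by filtration bookkeeping: the bidegree of a $d_2$ into $e_1[9]$ has empty source (the $39$-stem of $S^0$ carries no filtration-$2$ class, as $Ext^{2,*}$ is spanned by the $h_ih_j$), so no earlier boundary can move $e_1[9]$ into $\mathrm{im}(p_\sharp)$ on $E_3$. Combined with an explicit determination of $Ext(\Sigma^7C\eta)$ in the $47$-stem, this also disposes of the residual bottom-cell sources in the second case and completes the proof.
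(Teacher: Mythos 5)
Your handling of $r\le 3$ is essentially the paper's own first step (naturality along $p\colon \Sigma^7C\eta \to S^9$, Bruner's differential, and the fact that $e_1[9]$ does not lift to $\Sigma^7C\eta$ because the connecting map is $h_1$-multiplication and $h_1e_1=h_3d_1\neq 0$), and your unpacking of that step is fine. The gap is in your ``decisive case.'' The equation supplied by naturality lives on the $E_r$-page of $S^9$, so its right-hand side is the \emph{$E_r$-class} of $h_1t[9]$, not the $E_2$-element; and for $r\ge 4$ that class is zero, precisely because $d_3(e_1[9])=h_1t[9]$ has already killed it at $E_3$ in $S^9$. Hence for $r\ge 4$ your naturality equation reads $d_r([p_\sharp x])=0$ and yields no contradiction whatsoever: the inference ``so $r=3$'' is a non sequitur. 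This is not a vacuous worry. There is a filtration-$3$ class $h_0h_3h_5[9]$ in the $47$-stem of $Ext(\Sigma^7C\eta)$ (and of $Ext(P_7^9)$) with $p_\sharp(h_0h_3h_5[9])\neq 0$, which a priori could support a $d_4$ hitting $h_1t[9]$; your argument never excludes it. Excluding it needs an input you never invoke: the known differential $d_4(h_0h_3h_5[9])=h_0^2x[9]$ in $S^9$, whose target is nonzero on $E_4(S^9)$, is incompatible by naturality with $d_4(h_0h_3h_5[9])=h_1t[9]$ upstairs, since $h_1t[9]$ maps to zero on $E_4(S^9)$.

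Your second case is also not yet a proof. First, the residual danger is mischaracterized: if $x=i_\sharp y$ with $y$ a permanent cycle of $S^7$, then $x$ is a permanent cycle of $\Sigma^7C\eta$ by naturality of $i$, so there is nothing to exclude there; the uncontrolled situation is rather when $y$ supports a differential in $S^7$ whose target dies under $i_\sharp$ (or is killed on an earlier page), after which naturality of $i$ constrains none of the higher differentials on $x$. Second, ``listing the classes and checking each directly'' is a plan, not an argument; for $P_7^9$ that list contains $h_1h_3h_5[8]$, and disposing of it requires an actual device --- the paper shows it is a permanent cycle by working in the $H\mathbb{F}_2$-subcomplex $P_7^8$ and pushing forward. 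In summary, your proposal recovers the paper's argument for $r\le 3$, but both of the remaining candidates the paper must kill --- $h_0h_3h_5[9]$ via the $d_4$ in $S^9$, and $h_1h_3h_5[8]$ via $P_7^8$ --- fall through the cracks of your case division.
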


\begin{proof}
The element $h_1t[9]$ is hit by a $d_3$ differential on $e_1[9]$

In the Adams spectral sequence of $S^9$, we have the Bruner differential
$$d_3(e_1[9]) = h_1t[9].$$
However, the element $e_1[9]$ is not present in either $Ext(\Sigma^7 C \eta)$ or $Ext(P_7^9)$.

Therefore, by naturality, the element $h_1t[9]$ cannot be hit by any $d_r$ differential for $r\leq 3$ in the Adams spectral sequence of $\Sigma^7 C \eta$ and $P_7^9$.

We have the Adams $E_2$-page of $\Sigma^{7}C\eta$ and $P_7^9$ in the 46 and 47 stems for $s\leq 7$ in Table 2.

\begin{table}[h]
\caption{The Adams $E_2$-page of $\Sigma^{7}C\eta$ and $P_7^9$ in the 46 and 47 stems for $s\leq 7$}
\centering
\begin{tabular}{ l | l l | l l }
& $Ext(\Sigma^{7}C\eta)$ & & $Ext(P^9_7)$ & \\ [0.5ex]
\hline
$s\backslash t-s$ & 46 & 47 & 46 & 47 \\  
\hline 
7 & $h_1t[9]$ & $\bullet$ & $h_1t[9]$ & $\bullet$\\
  & $h_0^2x[9]$ & & $h_0^2x[9]$ & $\bullet$ \\ \hline
6 & $h_0x[9]$ & $\bullet$ & $h_0x[9]$ & $\bullet$ \\
  & & $\bullet$ & $h_1x[8]$ & $\bullet$ \\ \hline
5 & & $\bullet$ & $\bullet$ & $\bullet$\\
  & & $\bullet$ & &$\bullet$\\ \hline
4 & $\bullet$ & $\bullet$ & $\bullet$ & $\bullet$\\
  & & $\bullet$ & $\bullet$ & $\bullet$\\
  & & & & $\bullet$\\ \hline
3 & $\bullet$ & $h_0h_3h_5[9]$ & $\bullet$ & $h_0h_3h_5[9]$\\ \hline
  & & & & $h_1h_3h_5[8]$\\
\end{tabular}
\label{p79}
\end{table}
We need to rule out two candidates: $h_0h_3h_5[9]$ and $h_1h_3h_5[8]$.

In the Adams spectral sequence of $S^9$, we have a $d_4$ differential:
$$d_4(h_0h_3h_5[9]) = h_0^2x[9].$$
By naturality of the quotient map to $S^9$, the element $h_0h_3h_5[9]$ cannot support a $d_4$ differential that kills $h_1t[9]$.

For the element $h_1h_3h_5[8]$, it is straightforward to check it is a permanent cycle in the Adams spectral sequence of $P_7^8$, and hence a permanent cycle in that of $P_7^9$. This rules out the candidate $h_1h_3h_5[8]$ and completes the proof.
\end{proof}

\begin{rem}
In $Ext^{6,6+46}(P_7^9)$, the element $h_1x[8]$ is clearly a surviving cycle. There are two possibilities for the other element $h_0x[9]$: it is either killed by a $d_3$ differential from $h_0h_3h_5[9]$, or it survives and detects $\{h_1h_3h_5\}[7]$. We will leave the reader to figure out which way it goes.
\end{rem}

We prove Proposition 4.6 in Step 2.

\begin{prop}
Under the inclusion map $S^7 \hookrightarrow \Sigma^7 C\eta$, the element $h_1t[9]$ detects the image of $\sigma\{d_1\}[7]$ in $\pi_{46}(\Sigma^7 C\eta)$. By naturality, the same statement is true after we further map it to $\pi_{46}(P_7^9)$.
\end{prop}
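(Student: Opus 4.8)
The plan is to pin down the Adams filtration of the bottom-cell class $i_*(\sigma\{d_1\}[7])$ in $\pi_{46}(\Sigma^7 C\eta)$ by showing that its expected leading term $h_3d_1[7]$ (coming from $\sigma$ detected by $h_3$ and $\{d_1\}$ detected by $d_1$) is killed in $Ext(\Sigma^7C\eta)$, so the detection is forced into higher filtration. Concretely, in the long exact sequence computing $Ext(\Sigma^7C\eta)$ one has $i_\sharp(h_3d_1[7]) = i_\sharp(h_1e_1[7]) = 0$, since $h_1e_1[7]$ lies in the image of the connecting map $h_1\colon Ext(S^8)\to Ext(S^7)$; equivalently this is the algebraic Atiyah--Hirzebruch differential $d_2(e_1[9]) = h_3d_1[7]$ of Remark 4.4. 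Hence $i_*(\sigma\{d_1\}[7])$ has Adams filtration strictly larger than that of $h_3d_1$; the candidates in this range are recorded in Table 2, and the goal is to show that $i_*(\sigma\{d_1\}[7])$ is \emph{nonzero} and detected \emph{precisely} by $h_1t[9]$, which is a permanent cycle by Proposition 4.3 and a non-boundary by Lemma 4.5.

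First I would realize a concrete homotopy class with leading term $h_1t[9]$. Let $\beta\in\pi_{43}(\Sigma^7C\eta)$ be the class of Lemma 4.1, detected by $h_2n[9]$ with $p_*\beta = \nu\{n\}[9]$. Using the relation $h_1t = h_2\cdot h_2n$ together with the fact that $h_1t[9]$ survives (Proposition 4.3 and Lemma 4.5), multiplicativity of the Adams spectral sequence shows that $\nu\beta$ is detected by $h_2\cdot h_2n[9] = h_1t[9]$; in particular $\nu\beta\neq 0$. Applying $p_*$ gives $p_*(\nu\beta) = \nu\cdot\nu\{n\}[9] = \nu^2\{n\}[9] = 0$, where $\nu^2\{n\}=0$ in $\pi_{37}$ because its would-be leading term $h_2^2n = h_1t$ is hit by Bruner's differential $d_3(e_1)=h_1t$. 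By exactness, $\nu\beta = i_*\zeta$ for some $\zeta\in\pi_{46}(S^7)$.

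The crux is to identify $\zeta$ with $\sigma\{d_1\}[7]$ modulo $\ker i_*$. This is exactly where the lemma of Section 5, relating Toda brackets to extensions in $2$-cell complexes, enters: applied to the cofiber sequence $S^8\xrightarrow{\eta}S^7\xrightarrow{i}\Sigma^7C\eta\xrightarrow{p}S^9$ and the lift $\beta$ of $\nu\{n\}[9]$, it expresses $\zeta$ as a Toda bracket of $\nu$, $\nu\{n\}$ and the attaching map $\eta$. This bracket is detected by $h_3d_1$: the Bruner differential $d_3(e_1) = h_1t = h_2\cdot h_2n$ produces the Massey product $\langle h_2, h_2n, h_1\rangle = h_1e_1 = h_3d_1$ in the $E_4$-page (the companion of the product $\langle h_2n,h_2,h_1\rangle$ used in Lemma 2.2), and Moss's theorem \cite{Mos} converts this into the assertion that the Toda bracket is detected by $h_3d_1$ and contains $\sigma\{d_1\}$. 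Matching $\zeta$ with $\sigma\{d_1\}[7]$ then reduces to controlling the bracket indeterminacy under $i_*$: the summand $\eta\cdot\pi_{38}$ dies because $i\circ\eta\simeq 0$ by the cofiber relation, while the summand $\nu\cdot\pi_{36}$ must be checked not to disturb the $h_1t[9]$-detection. Granting this, $i_*(\sigma\{d_1\}[7]) = \nu\beta$, detected by $h_1t[9]$.

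I expect the step identifying $\zeta$ — the application of the Section 5 lemma together with the indeterminacy bookkeeping — to be the main obstacle; this is the ``essential step'' flagged in Remark 4.4, and the zigzag $h_3d_1[7]\xleftarrow{d_2}e_1[9]\xrightarrow{d_3}h_1t[9]$ is precisely its shadow. Once Proposition 4.6 is established for $\Sigma^7C\eta$, the statement for $P_7^9$ is immediate by naturality: the inclusions $S^7\hookrightarrow\Sigma^7C\eta$ and $S^7\hookrightarrow P_7^9$ are compatible with the quotient $\Sigma^7C\eta\twoheadrightarrow P_7^9$, which carries $h_1t[9]$ to $h_1t[9]$ by Remark 3.5.
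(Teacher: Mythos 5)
Your proposal follows essentially the same route as the paper's proof: the same coextension class $\beta$ detected by $h_2n[9]$ (the paper's Lemma 4.2), the same multiplicativity argument showing $\nu\beta$ is detected by $h_1t[9]$ (Proposition 4.3 plus the non-boundary statement, Lemma 4.4), and the same final identification via the Section 5 lemma combined with Moss's theorem and Lemma 2.2. However, the two justifications you supply at the delicate points do not hold up as written. First, your claim that $\nu^2\{n\}=0$ \emph{because} its would-be leading term $h_2^2n=h_1t$ is hit by Bruner's differential is a non sequitur: a class whose expected detecting element dies in the $E_\infty$-page is merely pushed into higher Adams filtration, not forced to vanish, and $\pi_{37}$ is nonzero above filtration $7$. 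The vanishing $\nu\{n\}\cdot\nu=0$ is true, but it is an \emph{input} here --- it is exactly the hypothesis $\beta\gamma=0$ needed for Lemma 5.2/5.3, and is already required for the Toda brackets in Lemma 2.2 to be defined --- and cannot be extracted from $d_3(e_1)=h_1t$. Note also that your whole lifting detour ($p_*(\nu\beta)=0$, hence $\nu\beta=i_*\zeta$) is unnecessary: Lemma 5.3 is an equality of \emph{sets}, $\nu\{n\}[9]\cdot\nu=\langle\eta,\nu\{n\},\nu\rangle[7]$, which already places $\nu\beta$ in the image of $i_*$.

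Second, and more seriously, you leave the crux unproved: ``the summand $\nu\cdot\pi_{36}$ must be checked not to disturb the $h_1t[9]$-detection. Granting this\dots''. This is precisely the step you yourself identify as the main obstacle, so the proposal is incomplete exactly where the content is. The paper's mechanism for closing it is the two-sided structure of Lemma 5.2: the sets $-L^{\alpha}\beta\cdot\Sigma\gamma$ and $i_\alpha\cdot L_\beta\alpha\cdot L^\beta\gamma$ have the \emph{same} indeterminacy, namely $i_*(\pi_{36}\cdot\nu)$ (the $\eta$-part of the Toda bracket indeterminacy dies under $i_*$, as you correctly observed), so $i_*(\sigma\{d_1\})$ and $\beta\nu$ agree modulo this common indeterminacy, and the entire problem reduces to the single filtration statement that $i_*(\pi_{36}\cdot\nu)$ lies in Adams filtration at least $8$. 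That statement is what must be verified (any nonzero such product has filtration at least $7$ automatically, since $\pi_{36}$ sits in filtration at least $6$; one must rule out detection in filtration exactly $7$ of the relevant stem, which is a chart check). Your write-up neither performs this check nor reduces to it cleanly, because by working with the lift $\zeta$ rather than with images under $i_*$ you must additionally track which element of the bracket is $\sigma\{d_1\}$ modulo $\ker i_*$. So: right route, right ingredients, but one fallacious justification and one missing verification at the two points where the proof actually has content.
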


\begin{proof}
By Lemma 4.2 and Proposition 4.3, the element $h_2n[9]$ survives in the Adams spectral sequence of $\Sigma^7 C \eta$, and detects a homotopy class that maps to $\nu\{n\}[9]$ under the quotient map
\begin{displaymath}
    \xymatrix{
\Sigma^7 C \eta \ar@{->>}[r] & S^9.
    }
\end{displaymath}
By Lemma 4.4, the element $h_1t[9] = h_2 \cdot h_2n[9]$ survives and detects the homotopy class $\nu\{n\}[9] \cdot \nu$. As showed in the proof of Lemma 2.2, the element $h_3d_1 = h_1e_1$ detects an element in the Toda bracket
$$\langle \eta, \nu\{n\}, \nu\rangle.$$
Therefore, by Lemma 5.3, we have
$$\nu\{n\}[9] \cdot \nu = \langle \eta, \nu\{n\}, \nu\rangle[7] = \sigma\{d_1\}[7]$$
in $\pi_{46}(\Sigma^7 C\eta)$.
\end{proof}

Now we prove Step 3.

\begin{prop}
Under the inclusion map $S^7 \hookrightarrow P_1^9$, the element $h_1t[9]$ in $Ext(P_1^9)$ detects the image of $\sigma\{d_1\}[7]$ in $\pi_{46}(P_1^9)$.
\end{prop}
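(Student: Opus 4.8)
The plan is to deduce Proposition 4.7 from Proposition 4.6 by a naturality argument using the quotient map $P_1^9 \twoheadrightarrow P_7^9$. First I would recall the setup from the ``road map'': we have the $H\mathbb{F}_2$-quotient map $q\colon P_1^9 \twoheadrightarrow P_7^9$ and a chosen inclusion $j\colon S^7 \hookrightarrow P_1^9$ lifting the inclusion $i\colon S^7 \hookrightarrow P_7^9$, so that $q\circ j = i$. By Proposition 4.6, the element $h_1t[9]$ in $Ext(P_7^9)$ is a permanent cycle detecting the image $i_*(\sigma\{d_1\}[7])$ in $\pi_{46}(P_7^9)$. The goal is to transport this statement back across $q$ to $P_1^9$.

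The key step is to verify that $h_1t[9]$ is a well-defined permanent cycle in $Ext(P_1^9)$ mapping to $h_1t[9]$ in $Ext(P_7^9)$ under $q_\sharp$. By Remark 3.5 and the naturality of the algebraic Atiyah--Hirzebruch spectral sequence, $q_\sharp$ sends the generator $h_1t[9]$ of $Ext(P_1^9)$ to the generator $h_1t[9]$ of $Ext(P_7^9)$, provided $h_1t[9]$ is well-defined, i.e.\ of lowest Atiyah--Hirzebruch filtration in its bidegree. Since $h_1t[9]$ is a permanent cycle in $Ext(P_7^9)$ by Proposition 4.3, I would argue it is also a permanent cycle in $Ext(P_1^9)$: any differential $d_r(h_1t[9])$ in $P_1^9$ would, by naturality of $q$, either map to $d_r(h_1t[9]) = 0$ in $P_7^9$ or land in the kernel of $q_\sharp$, and I would check the relevant bidegree $(s,t)$ in $Ext(P_1^9)$ to rule out a nonzero target. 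Dually, because $e_1[9]$ is absent from $Ext(P_1^9)$ (the cell structure of $P_1^9$ does not support it, exactly as in Lemma 4.4), $h_1t[9]$ is not hit by the Bruner $d_3$, and the remaining potential sources can be excluded by the same naturality argument via $q$ together with the $P_7^9$ computation.

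With $h_1t[9]$ established as a surviving cycle in $Ext(P_1^9)$, I would then run the detection argument. The class $\sigma\{d_1\}[7]$ lives in $\pi_{46}(S^7)$ and $j$ pushes it to $j_*(\sigma\{d_1\}[7]) \in \pi_{46}(P_1^9)$. Applying $q_*$ and using $q\circ j = i$ gives $q_*(j_*(\sigma\{d_1\}[7])) = i_*(\sigma\{d_1\}[7])$, which by Proposition 4.6 is detected by $h_1t[9]$ in $Ext(P_7^9)$. Because $q_\sharp$ carries $h_1t[9]$ to $h_1t[9]$ and does so injectively on the relevant bidegree, the element $j_*(\sigma\{d_1\}[7])$ in $\pi_{46}(P_1^9)$ must be detected by $h_1t[9]$ in $Ext(P_1^9)$: its Adams filtration cannot be lower (otherwise its image under $q_*$ would have filtration lower than that of $i_*(\sigma\{d_1\}[7])$, contradicting Proposition 4.6), and a filtration-$7$ detection forces the class $h_1t[9]$ since it is the unique survivor of that filtration mapping correctly under $q_\sharp$.

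The main obstacle I expect is the filtration bookkeeping in the last step: I must ensure that $j_*(\sigma\{d_1\}[7])$ is not detected by some element of $Ext(P_1^9)$ that lies in the kernel of $q_\sharp$ but has Adams filtration strictly less than $7$, since such an element would be invisible after mapping to $P_7^9$ yet could genuinely detect the class in $P_1^9$. Resolving this requires a careful inspection of $Ext(P_1^9)$ in the $46$-stem for $s < 7$, identifying all elements killed by $q_\sharp$ and verifying that none of them can detect the image of $\sigma\{d_1\}[7]$ --- for instance, by checking that every such low-filtration class either is not a permanent cycle or already detects a different homotopy class. This is where the explicit knowledge of the algebraic Atiyah--Hirzebruch spectral sequence for $P_1^\infty$ from \cite{WX2}, truncated to $P_1^9$, becomes essential.
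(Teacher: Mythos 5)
Your overall frame---transporting Proposition 4.6 across the quotient $q\colon P_1^9 \twoheadrightarrow P_7^9$ by naturality of Adams filtrations---is the same as the paper's, but your proof contains a step that is simply false, and the place where you correctly sense trouble is exactly where the paper's real work lies, and you leave it unresolved. The false step is the claim that the Adams filtration of $j_*(\sigma\{d_1\}[7])$ in $\pi_{46}(P_1^9)$ ``cannot be lower'' than $7$ because otherwise its image under $q_*$ would have filtration lower than $7$. Naturality goes the other way: maps can only \emph{raise} Adams filtration, so a filtration-$6$ class in $\pi_{46}(P_1^9)$ whose filtration-$6$ detector lies in the kernel of $q_\sharp$ on $E_\infty$ maps perfectly well to a filtration-$7$ class in $\pi_{46}(P_7^9)$, with no contradiction to Proposition 4.6. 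What naturality actually gives is: filtration $\geq 5$ (from $h_3d_1[7]$ in $S^7$) and $\leq 7$ (from the image in $P_7^9$); the paper sharpens the lower bound to $6$ by observing that $h_3d_1[7]$ is not present in $Ext(P_1^9)$, so the entire content of the proposition is the exclusion of filtration~$6$.

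You do acknowledge this obstacle in your final paragraph, but your proposed resolution---inspect the kernel of $q_\sharp$ below filtration $7$ and check that each class ``either is not a permanent cycle or already detects a different homotopy class''---is not a proof and does not match how these classes are actually excluded. The four elements of $Ext^{6,6+46}(P_1^9)$ are $Ph_1h_5[6]$, $h_0^2g_2[2]$, $h_1x[8]$, $h_0x[9]$. The last two are disposed of by the kernel condition (Remark 4.5), but the remaining two cannot be removed by chart inspection: the paper rules out $h_0^2g_2[2]$ by pushing forward along the transfer $P_1^\infty \to S^0$, where detection by $h_0^2g_2[2]$ would produce a $\sigma$-extension from $h_3d_1$ to $B_1$, equivalent by Lemma 2.2 to an $\eta$-extension from $h_1g_2$ to $B_1$, contradicting Isaksen's Lemma 4.2.47; and it rules out $Ph_1h_5[6]$ by an order argument, using $2\cdot\sigma\{d_1\}=0$ together with the $h_0$-tower $h_0\cdot Ph_1h_5[6]=Ph_1^2h_5[5]$, which forces one to prove that $Ph_1^2h_5[5]$ is not a boundary---this is Lemma 4.8 (all elements in Adams filtration $4$ to $6$ of the $47$-stem of $Ext(P_1^\infty)$ are permanent cycles) plus an analysis of the one remaining possible source $h_2h_5[13]$. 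Both exclusions rest on homotopy-theoretic input external to the $Ext$ charts of $P_1^9$ and $P_7^9$, so the proposal as written stops short of the theorem: it sets up the right reduction, then asserts rather than performs the step that constitutes the proof.
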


The idea of the proof of Proposition 4.7 is to make use of naturality of the Adams filtrations.

\begin{displaymath}
    \xymatrix{
S^7 \ar@{^{(}->}[r] & P_1^9 \ar@{->>}[r] & P^9_7 \\
h_3d_1[7] & & h_1t[9] \\
AF = 5 & & AF =7
    }
\end{displaymath}

The homotopy class $\sigma\{d_1\}[7]$ is detected by $h_3d_1[7]$ in $S^7$, which has Adams filtration 5, while its image in $\pi_{46}(P_7^9)$ is detected by $h_1t[9]$ by Proposition 4.6, which has Adams filtration 7. Therefore, to prove Proposition 4.7, we only need to rule out surviving cycles in the Adams filtration 6, which also lie in the kernel of the map
\begin{displaymath}
    \xymatrix{
     P_1^9 \ar@{->>}[r] & P^9_7
    }
\end{displaymath}
in the Adams $E_\infty$-page. Note that the element $h_3d_1[7]$ is not present in $Ext(P_1^9)$.

\begin{proof}
We have the Adams $E_2$-page of $P_1^9$ and $P_1^\infty$ in the 46 and 47 stems for $s\leq 8$ in Table 3.

\begin{table}[h]
\caption{The Adams $E_2$-page of $P_1^9$ and $P_1^\infty$ in the 46 and 47 stems for $s\leq 8$}
\centering
\begin{tabular}{ l | l l | l l }
& $Ext(P_1^9)$ & & $Ext(P_1^\infty)$ & \\ [0.5ex]
\hline
$s\backslash t-s$ & 46 & 47 & 46 & 47 \\  
\hline 
8 & $Ph_1^3h_5[4]$ & $\bullet$ & $Ph_1^3h_5[4]$ & $\bullet$\\
  & $\bullet$ & $\bullet$ & & \\
  & $\bullet$ & $\bullet$ & &  \\ \hline
7 & $Ph_1^2h_5[5]$ & $\bullet$ & $Ph_1^2h_5[5]$ & $\bullet$ \\
  & $h_1t[9]$ & $\bullet$ & $h_1t[9]$ & $\bullet$ \\
  & $h_0^2x[9]$ & $\bullet$ & & $\bullet$ \\
  & & $\bullet$ & & \\ \hline
6 & $Ph_1h_5[6]$ & $\bullet$ & $Ph_1h_5[6]$ & $h_1h_5d_0[1]$ \\
  & $h_0^2g_2[2]$ & $\bullet$ & $h_0^2g_2[2]$ & $h_1x[9]$ \\
  & $h_1x[8]$ & $\bullet$ & $h_1x[8]$ & \\
  & $h_0x[9]$ & & &  \\ \hline
5 & $h_0^3h_3h_5[8]$ & $\bullet$ & $h_0^3h_3h_5[8]$ & $h_1g_2[2]$ \\
  & $\bullet$ & $\bullet$ & $\bullet$ & $h_1f_1[6]$ \\
  & $\bullet$ & $\bullet$ & $\bullet$ &  \\
  & & & $\bullet$ &  \\ \hline
4 & $\bullet$ & $\bullet$ & $h_1^3h_5[12]$ & $h_0h_4^3[2]$ \\
  & & $\bullet$ & & $g_2[3]$ \\
  & & $\bullet$ & & $f_1[7]$ \\
  & & $\bullet$ & & \\ \hline
3 & $\bullet$ & $\bullet$ & $h_1^2h_5[13]$ & \\
  & $\bullet$ & & $\bullet$ & \\
  & & & $\bullet$ & \\ \hline
2 & & & $h_1h_5[14]$ & $h_2h_5[13]$\\ \hline
1 & & & $h_5[15]$ & \\
\end{tabular}
\label{p19}
\end{table}

There are 4 elements in $Ext^{6,6+46}(P_1^9)$:
$$Ph_1h_5[6], \ h_0^2g_2[2], \ h_1x[8], \ h_0x[9].$$
Remark 4.5 rules out the last two candidates, since they do not lie in the kernel of the map
\begin{displaymath}
    \xymatrix{
     P_1^9 \ar@{->>}[r] & P^9_7
    }
\end{displaymath}
in the Adams $E_\infty$-page.

In the table for the transfer map in \cite{WX2}, we have that the element $h_0^2g_2[2]$ maps to $B_1$. If the image of the homotopy class $\sigma\{d_1\}[7]$ were detected by $h_0^2g_2[2]$, then we would have a $\sigma$-extension from $h_3d_1$ to $B_1$ in $\pi_{46}S^0$, which by Lemma 2.2 is equivalent to an $\eta$-extension from $h_1g_2$ to $B_1$ in $\pi_{46}S^0$. However, the proof of Lemma 4.2.47 of \cite{Isa1} shows the latter is not true.

The only candidate left is $Ph_1h_5[6]$. To rule it out, we notice there is a long $h_0$ tower in the 46 stem of $P_1^\infty$: from $h_5[15]$ to $Ph_1^3h_5[4]$. In particular, we have
$$h_0 \cdot Ph_1h_5[6] = Ph_1^2h_5[5], \ h_0 \cdot Ph_1^2h_5[5] = Ph_1^3h_5[4].$$
Since
$$2 \cdot \sigma\{d_1\} =0,$$
the image of the homotopy class $\sigma\{d_1\}[7]$ must have order 2. Therefore, we only need to show the element $Ph_1^2h_5[5]$ is not a boundary. In the following Lemma 4.8, we show that the elements in Adams filtration 4 to 6 of $Ext(P_1^\infty)$ are all permanent cycles. This only leaves the possibility that $h_2h_5[13]$ kills $Ph_1^3h_5[4]$, but not $Ph_1^2h_5[5]$, and hence completes the proof.
\end{proof}

\begin{lem}
The elements in Adams filtration 4 to 6 of the 47-stem of $Ext(P_1^\infty)$ are all permanent cycles.
\end{lem}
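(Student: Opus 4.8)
The plan is to show that none of the seven generators of the $47$-stem listed in filtrations $4$ through $6$ of Table 3, namely
$$h_1h_5d_0[1],\ h_1x[9]\ (s=6);\quad h_1g_2[2],\ h_1f_1[6]\ (s=5);\quad h_0h_4^3[2],\ g_2[3],\ f_1[7]\ (s=4),$$
supports a nontrivial Adams differential. Since a $d_r$ out of filtration $s$ lands in the $46$-stem in filtration $s+r\ge s+2$, Table 3 shows the only possible targets are the $46$-stem classes in filtration $\ge 6$: the elements $Ph_1h_5[6]$, $h_0^2g_2[2]$, $h_1x[8]$ at $s=6$, the elements $Ph_1^2h_5[5]$, $h_1t[9]$ at $s=7$, and $Ph_1^3h_5[4]$ at $s=8$. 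The strategy is to eliminate these using naturality under three kinds of maps: the Hopf invariant one inclusions, the cellular quotients to stunted projective spectra, and the transfer.

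First I would use the bottom cell inclusion $S^1\hookrightarrow P_1^\infty$ together with the Hopf invariant one inclusions $S^3,S^7\hookrightarrow P_1^\infty$. By Remark 3.5 these carry the sphere generators $h_1h_5d_0$, $g_2$, $f_1$ to $h_1h_5d_0[1]$, $g_2[3]$, $f_1[7]$ respectively; since each of these is a permanent cycle in the Adams spectral sequence for $S^0$ by Isaksen's computation, naturality of the Adams differential gives $d_r(h_1h_5d_0[1])=d_r(g_2[3])=d_r(f_1[7])=0$ for all $r$, irrespective of the target. This disposes of three of the seven classes outright, including two of the three filtration-$4$ classes. (If such a sphere class happens to support a differential on $S^0$, one instead checks that its target vanishes in $Ext(P_1^\infty)$ for Atiyah--Hirzebruch filtration reasons; this is the only point where the explicit computation of \cite{WX2} enters for these three classes.)

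Next, for the cell-$2$ classes $h_0h_4^3[2]$ and $h_1g_2[2]$ I would use the cellular quotient $q\colon P_1^\infty \twoheadrightarrow P_2^\infty$. As $S^2$ is the bottom cell of $P_2^\infty$, both classes are pulled back from permanent cycles of $S^2$, hence are permanent cycles in $P_2^\infty$; naturality forces $d_r$ of each to lie in $\ker q_\sharp$, that is, to be supported on the bottom cell $S^1$. Inspecting the $46$-stem of $Ext(P_1^\infty)$ in filtration $\ge 6$ shows there is no class of the form $a[1]$, so both differentials vanish. Applying the same argument with $q\colon P_1^\infty \twoheadrightarrow P_n^\infty$ for $n=6$ and $n=9$ reduces the differentials supported by $h_1f_1[6]$ and $h_1x[9]$ to targets supported on the cells below $n$; the only such targets in the correct bidegrees are the Adams periodicity classes $Ph_1^2h_5[5]$ and $Ph_1^3h_5[4]$.

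The main obstacle is therefore to exclude differentials from the two remaining classes into the periodicity family $Ph_1^2h_5[5]$, $Ph_1^3h_5[4]$ --- precisely the family whose survival is at stake in Proposition 4.7, so the two arguments are intertwined. The putative $d_2(h_1f_1[6])=Ph_1^2h_5[5]$ is ruled out by $h_0$-linearity: since $h_0h_1=0$ we have $h_0\cdot h_1f_1[6]=0$, so the Leibniz rule gives $0=h_0\cdot d_2(h_1f_1[6])=h_0\cdot Ph_1^2h_5[5]=Ph_1^3h_5[4]\neq 0$, a contradiction, where I use the $h_0$-tower relation $h_0\cdot Ph_1^2h_5[5]=Ph_1^3h_5[4]$ already recorded in the proof of Proposition 4.7. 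The remaining cases $d_3(h_1f_1[6])=Ph_1^3h_5[4]$ and $d_2(h_1x[9])=Ph_1^3h_5[4]$ sit at the top of the $h_0$-tower, where $h_0$-linearity is vacuous, and these are the genuinely delicate point: I would rule them out by pushing forward along the transfer map $P_1^\infty\to S^0$ and using the transfer table of \cite{WX2}, verifying that the image of $Ph_1^3h_5[4]$ in $Ext(S^0)$ is not the target of the corresponding differential in the sphere. This last step, where the $RP^\infty$ bookkeeping of \cite{WX2} is indispensable, is where I expect the real work to lie.
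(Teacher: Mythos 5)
Your handling of $h_1h_5d_0[1]$, $g_2[3]$ and $f_1[7]$ via the Hopf-invariant-one inclusions $S^1,S^3,S^7\hookrightarrow P_1^\infty$ is exactly the paper's argument, but the remaining four classes are where the content lies, and there your plan breaks down at the very step you single out as delicate. You propose to exclude $d_2(h_1x[9])=Ph_1^3h_5[4]$ and $d_3(h_1f_1[6])=Ph_1^3h_5[4]$ by naturality under the transfer, ``verifying that the image of $Ph_1^3h_5[4]$ in $Ext(S^0)$ is not the target of the corresponding differential''. That image is zero, so this can never produce a contradiction: $Ph_1^3h_5[4]=h_0\cdot Ph_1^2h_5[5]$, the transfer-induced map on $Ext$ is $Ext(S^0)$-linear, and the proof of Lemma 4.9 shows precisely that the transfer kills $Ph_1^2h_5[5]$ (its only possible nonzero value, the generator $N$ of $Ext^{8,8+46}(S^0)$, is not $h_0$-divisible, so $Ph_1^2h_5[5]\mapsto 0$, hence $Ph_1^3h_5[4]\mapsto h_0\cdot 0=0$). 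Naturality under a map gives no constraint at all on a differential whose target lies in the kernel of the induced map. This is exactly where the paper brings in different ideas: $h_1f_1[6]=h_0\cdot f_1[7]$ is a product of permanent cycles, and $h_1x[9]$ is treated using the $H\mathbb{F}_2$-subcomplex of $P_1^\infty$ with cells in dimensions $3,5,7,9$, comparing its Atiyah-Hirzebruch spectral sequence with its Adams spectral sequence via the homotopy-theoretic relations $0\in\eta\cdot\{h_1x\}$ and $0\in\langle\nu,\eta,\{h_1x\}\rangle$. Neither idea appears in your proposal, and without something of this kind the hardest cases of the lemma remain unproved.

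Two further steps are defective. First, the inference ``$h_0h_1=0$, hence $h_0\cdot h_1f_1[6]=0$'' is not valid in $Ext(P_1^\infty)$: products there routinely drop Atiyah-Hirzebruch filtration, and the paper's own relations refute this exact pattern ($h_0\cdot f_1[7]=h_1f_1[6]\neq 0$ although $h_0f_1=0$ in $Ext(S^0)$, and $h_0\cdot Ph_1h_5[6]=Ph_1^2h_5[5]\neq 0$ although $h_0\cdot Ph_1h_5=0$). The step happens to be repairable, since $h_0\cdot h_1f_1[6]=h_0^2\cdot f_1[7]$ is either $0$ or the permanent cycle $h_1h_5d_0[1]$, and both are annihilated by $d_2$, but as written the reasoning is false. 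Second, all of your target enumerations are read off Table 3, which is truncated at $s\le 8$: a $d_r$ with $r\ge 3$ on your filtration-$6$ classes (or $r\ge 5$ on the filtration-$4$ classes) lands in filtration $\ge 9$ of the $46$-stem of $Ext(P_1^\infty)$, a region about which the quoted data say nothing and which is not empty (by Lin's theorem it surjects onto the high-filtration part of the $46$-stem of $Ext(S^0)$). Relatedly, your quotient argument silently assumes that $h_0h_4^3$ and $h_1g_2$ are permanent cycles in the Adams spectral sequence of $S^0$, which you do not justify. The paper's proof is engineered so that none of this is needed: each of the seven classes is exhibited as the image of a permanent cycle under a map of Adams spectral sequences induced by an $H\mathbb{F}_2$-subcomplex ($S^1$, $S^3$, $S^7$, $P_1^2$, or the odd-cell complex), or as an $h_0$-multiple of such an image, so no list of possible targets --- and hence no knowledge of the $46$-stem beyond what is displayed --- is ever invoked.
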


\begin{proof}
There are 7 elements:
$$h_1h_5d_0[1], \ h_1x[9], \ h_1g_2[2], \ h_1f_1[6], \ h_0h_4^3[2], \ g_2[3], \ f_1[7].$$
The spheres $S^1, S^3, S^7$ are $H\mathbb{F}_2$-subcomplexes of $P_1^\infty$ by the solution of the Hopf invariant one problem. Since the elements $h_1h_5d_0, \ g_2, \ f_1$ are permanent cycles in the Adams spectral sequence for $S^0$, The elements $h_1h_5d_0[1], \ h_1g_2[2], \ f_1[7]$ are permanent cycles.

The element $h_1f_1[6] = h_0 \cdot f_1[7]$ is therefore also a permanent cycle.

It is straightforward to show that the elements $h_1g_2[2]$ and $h_0h_4^3[2]$ are permanent cycles in the Adams spectral sequence of $P_1^2$. By naturality they are permanent cycles in that of $P_1^\infty$.

For the element $h_1x[9]$, one uses the $H\mathbb{F}_2$-subcomplex of $P_1^\infty$ which contains cells in dimensions 3, 5, 7, 9 to show that it is a permanent cycle. In fact, by comparing the Atiyah-Hirzebruch spectral sequence with the Adams spectral sequence of this 4 cell complex, it follows from the following relations in the stable homotopy groups of spheres:
$$0 \in \eta \cdot \{h_1x\}, \ 0 \in \langle \nu, \eta, \{h_1x\} \rangle.$$
The homotopy class $\{h_1x\}[9]$ survives in the Atiyah-Hirzebruch spectral sequence, and is detected by $h_1x[9]$ in its Adams $E_2$-page. In particular, $h_1x[9]$ is a permanent cycle in the Adams spectral sequence of this 4 cell complex, and therefore also a permanent cycle in the Adams spectral sequence of $P_1^\infty$.
\end{proof}


Now we prove Step 4.

\begin{lem}
The element $h_1t[9]$ maps to $N$ under the transfer map.
\end{lem}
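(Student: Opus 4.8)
The plan is to read off the value of the transfer on the Adams $E_\infty$-page from the explicit computation of \cite{WX2}, after first clarifying in what sense the transfer ``maps $h_1t[9]$ to $N$.'' The transfer map $P_1^\infty \to S^0$ is trivial on mod $2$ homology for degree reasons, since $H_*(P_1^\infty)$ is concentrated in positive degrees while $H_*(S^0)$ is concentrated in degree $0$. Hence it induces the zero map on the Adams $E_2$-page, and consequently it strictly raises Adams filtration on homotopy classes. Thus the lemma is not a statement about a filtration-preserving map of $E_2$-pages (that map vanishes), but a leading-term, associated-graded statement: the image under the transfer of a class detected by $h_1t[9]$ is detected by $N$, with a jump in Adams filtration.

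First I would pin down the bidegrees to confirm the statement has the expected shape. The element $h_1t[9]$ lies in $Ext^{7,\,7+46}(P_1^\infty)$, that is, stem $46$ and filtration $7$, as recorded in Table 3. On the other side $N$ lies in stem $46$ and filtration $8$; this filtration can be read off from the relation $gnr = mN$ together with $m = \langle r, h_1, h_2\rangle \in Ext^{7,42}$, which forces $N \in Ext^{8,54}$. Since the transfer preserves stems and raises filtration, $N$ is the unique candidate in stem $46$ for the image of $h_1t[9]$ under a single-step filtration jump, and the real content is to verify that this image is genuinely nonzero, rather than landing in still higher filtration or being zero. This nonvanishing is exactly what the computation of the transfer in \cite{WX2} supplies: there the transfer is analyzed through the naturality of the algebraic Atiyah--Hirzebruch spectral sequence for $P_1^\infty$ together with Lin's (algebraic Kahn--Priddy) theorem, which determine the leading term of the transfer on each class. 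The resulting table, the same one used above to see that $h_0^2g_2[2]$ maps to $B_1$, records that $h_1t[9]$ maps to $N$. I expect the main obstacle to be precisely that this value is invisible at the level of $Ext$, where the transfer is zero: it is a hidden, filtration-increasing phenomenon whose determination rests on the full Atiyah--Hirzebruch/Kahn--Priddy machinery of \cite{WX2} rather than on any elementary $Ext$-computation.

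Finally, as a consistency check I would observe how this step closes the road map. By Propositions 4.6 and 4.7 the class detected by $h_1t[9]$ in $\pi_{46}(P_1^\infty)$ is the image of $\sigma\{d_1\}[7]$, and the composite $S^7 \hookrightarrow P_1^9 \to P_1^\infty \to S^0$ is $\sigma$; hence the transfer carries this class to $\sigma^2\{d_1\}$. Lemma 4.9 therefore amounts to identifying $N$ as the detecting element of $\sigma^2\{d_1\}$, which is exactly the $\sigma$-extension asserted in Proposition 1.3(1), confirming that this lemma is the decisive final step of the argument.
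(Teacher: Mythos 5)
Your setup is sound: the transfer is indeed zero on mod $2$ homology, the relevant map is the filtration-shifting (algebraic) transfer $Ext^{s,t}(P_1^\infty)\to Ext^{s+1,t+1}(S^0)$, the bidegree bookkeeping ($h_1t[9]\in Ext^{7,7+46}$, $N\in Ext^{8,8+46}$) is correct, and you correctly isolate the real content as showing the image is genuinely $N$ rather than zero or of higher filtration. But at exactly that point your proof stops: you discharge the decisive step by asserting that the transfer table of \cite{WX2} ``records that $h_1t[9]$ maps to $N$.'' That is the statement of the lemma itself, assumed as the content of an external table whose entries you have not verified. The paper does not argue this way, and the structure of its proof suggests it cannot: it uses the tables of \cite{WX2} only for additive structure, namely that $Ext^{8,8+46}(S^0)=\mathbb{Z}/2$ generated by $N$ and that $Ext^{7,7+46}(P_1^\infty)=(\mathbb{Z}/2)^2$ generated by $Ph_1^2h_5[5]$ and $h_1t[9]$ (identified via their Lambda algebra representatives), and then supplies an actual argument for where the transfer sends these classes.

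The missing idea is the combination of two facts. First, Lin's algebraic Kahn--Priddy theorem \cite{Lin} says the algebraic transfer is \emph{surjective} in positive stems, so $N$ must be the image of some class in $Ext^{7,7+46}(P_1^\infty)$, i.e.\ of a linear combination of the two generators. Second, $Ph_1^2h_5[5]=h_0\cdot Ph_1h_5[6]$ is divisible by $h_0$, and since the algebraic transfer is $Ext(S^0)$-linear, its image is $h_0$-divisible; as $N$ is not $h_0$-divisible, the image of $Ph_1^2h_5[5]$ cannot be $N$, hence vanishes in $\mathbb{Z}/2\{N\}$. Surjectivity then forces $h_1t[9]\mapsto N$. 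Note also that your framing of the phenomenon as ``invisible at the level of $Ext$'' and resting on ``the full Atiyah--Hirzebruch/Kahn--Priddy machinery'' is misleading: the algebraic transfer is a perfectly well-defined $Ext$-level map (raising $(s,t)$ by $(1,1)$), Lin's theorem is an $Ext$-level statement about it, and this is precisely what makes the paper's short, purely algebraic proof possible. In your write-up Lin's theorem appears only as background color explaining the provenance of a table; in the paper's proof it is the active ingredient.
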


\begin{proof}
We check the two tables in the appendix of \cite{WX2}. See \cite{WX2} for more details of the Lambda algebra notation we use here.
The element $N$ is in $Ext^{8,8+46}(S^0) = \mathbb{Z}/2$. Checking the table for $P_1^\infty$, we have that
$$Ext^{7,7+46}(P_1^\infty) = (\mathbb{Z}/2)^2, \text{~~generated by~~}  (5)~ 11~ 12~ 4~ 5~ 3~ 3~ 3,\ (9)~ 3~ 5~ 7~ 3~ 5~ 7~ 7,$$
which means $Ext^{7,7+46}(P_1^\infty)$ is generated by $Ph_1^2h_5[5]$ and $h_1t[9]$. Since $Ph_1^2h_5[5]$ is divisible by $h_0$ in $Ext(P_1^\infty)$, while $N$ is not divisible by $h_0$ in $Ext(S^0)$, $Ph_1^2h_5[5]$ cannot map to $N$ under the transfer map. By the algebraic Kahn-Priddy theorem \cite{Lin}, the other generator $h_1t[9]$ has to map to $N$.
\end{proof}

\section{A lemma for extensions in the Atiyah-Hirzebruch spectral sequence}

Let $\alpha:Y\rightarrow X$ and $\beta:Z\rightarrow Y$ be homotopy classes of maps between spectra. Suppose that the composite $\alpha\beta=0$.
Let $C\alpha$ and $C\beta$ be the cofiber of $\alpha$ and $\beta$ respectively. 

We have cofiber sequences:
$$Y\xrightarrow{\alpha}X\xrightarrow{i_\alpha}C\alpha\xrightarrow{\partial_\alpha}\Sigma Y,$$
$$Z\xrightarrow{\beta}Y\xrightarrow{i_\beta}C\beta\xrightarrow{\partial_\beta}\Sigma Z.$$

Denote by $L^\alpha\beta$ be the set of maps in $[\Sigma Z, C\alpha]$ such that the composite 
$$\Sigma Z\rightarrow C\alpha \xrightarrow{\partial_\alpha}\Sigma  Y$$ is $-\Sigma\beta$.  The indeterminacy of the set $L^\alpha\beta$ is $$i_\alpha\cdot[\Sigma Z, X].$$

Similarly, denote by $L_\beta\alpha$ be the set of maps in $[C\beta, X]$ such that the composite 
$$Y\xrightarrow{i_\beta} C\beta \rightarrow X$$ is $\alpha$. The indeterminacy of the set $L_\beta\alpha$ is $$[\Sigma Z, X]\cdot \partial_\beta.$$

\begin{lem}
The two sets of maps
$L^\alpha\beta\cdot\partial_\beta$ and $i_\alpha\cdot L_\beta\alpha$ in $[C\beta, C\alpha]$ are equal.
\end{lem}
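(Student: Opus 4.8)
The plan is to exploit the fact, already visible from the two stated indeterminacies, that the two sides of the desired equality are cosets of one and the same subgroup of $[C\beta, C\alpha]$; once this is in place, the entire problem collapses to producing a single map lying in both sets.

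First I would record the coset structure. Since we work with spectra, every group $[-,-]$ is abelian and composition is biadditive. By hypothesis $L^\alpha\beta$ is a coset of $i_\alpha\cdot[\Sigma Z,X]$; applying the additive map $(-)\circ\partial_\beta$ carries it onto a coset of $i_\alpha\cdot[\Sigma Z,X]\cdot\partial_\beta$. Symmetrically, $L_\beta\alpha$ is a coset of $[\Sigma Z,X]\cdot\partial_\beta$, and applying the additive map $i_\alpha\circ(-)$ carries it onto a coset of the \emph{same} subgroup $i_\alpha\cdot[\Sigma Z,X]\cdot\partial_\beta$. Thus $L^\alpha\beta\cdot\partial_\beta$ and $i_\alpha\cdot L_\beta\alpha$ are two cosets of a common subgroup, so they are either disjoint or equal. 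It therefore suffices to exhibit one map belonging to both.

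For the common element I would invoke the fill-in (octahedral) axiom of the triangulated structure. The hypothesis $\alpha\beta=0$ guarantees that $\alpha$ factors through $i_\beta$, so $L_\beta\alpha$ is nonempty; fix $\delta\in L_\beta\alpha$, so that $\delta\circ i_\beta=\alpha$. I read this as a commuting square between the rotated cofiber sequence of $i_\beta$, namely $Y\xrightarrow{i_\beta}C\beta\xrightarrow{\partial_\beta}\Sigma Z\xrightarrow{-\Sigma\beta}\Sigma Y$, and the cofiber sequence of $\alpha$, namely $Y\xrightarrow{\alpha}X\xrightarrow{i_\alpha}C\alpha\xrightarrow{\partial_\alpha}\Sigma Y$, with the identity on the left-hand copy of $Y$ and $\delta$ on the middle terms. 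The fill-in axiom then produces a map $\bar\delta\colon\Sigma Z\to C\alpha$ on the third terms making the two resulting squares commute:
$$i_\alpha\circ\delta=\bar\delta\circ\partial_\beta,\qquad \partial_\alpha\circ\bar\delta=-\Sigma\beta.$$
The second relation says precisely that $\bar\delta\in L^\alpha\beta$, so $\bar\delta\circ\partial_\beta$ lies in $L^\alpha\beta\cdot\partial_\beta$; the first relation then identifies this same map with $i_\alpha\circ\delta$, which lies in $i_\alpha\cdot L_\beta\alpha$. This is the sought common element, and the two cosets coincide.

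The only genuine subtlety, and the step demanding care, is the sign bookkeeping in the fill-in: the definition of $L^\alpha\beta$ is phrased with $-\Sigma\beta$ exactly so that the induced map $\bar\delta$ on cofibers lands in $L^\alpha\beta$ and not in a sign-twisted variant, and one must correctly match the connecting map $-\Sigma\beta$ of the rotated cofiber sequence of $i_\beta$ against $\partial_\alpha$. I expect no difficulty beyond this matching of conventions; the cell-diagram pictures promised in the Appendix make the geometry of the fill-in transparent and serve as a useful check on the signs.
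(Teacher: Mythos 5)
Your proof is correct and follows essentially the same route as the paper's: both arguments first observe that the two sets are cosets of the common subgroup $i_\alpha\cdot[\Sigma Z,X]\cdot\partial_\beta$, and then produce a common element by fixing $\delta\in L_\beta\alpha$ (the paper's $f$) and extending the resulting map of cofiber sequences--identity on $Y$, $\delta$ on the middle terms--to a fill-in $\bar\delta\in L^\alpha\beta$ (the paper's $g$), with the commutativity of the middle square giving $i_\alpha\circ\delta=\bar\delta\circ\partial_\beta$. Your explicit sign bookkeeping for the rotated triangle matches the paper's diagram exactly.
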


\begin{proof}

It is clear that the indeterminacy of the two sets are given by the following composition 
$$i_\alpha\cdot[\Sigma Z,X]\cdot\partial_\beta.$$
We need to show that they contain one common element. We have the following diagram
	\begin{displaymath}
    \xymatrix{
   & Y \ar[r]^{\alpha} \ar@{=}[dd] & X \ar[r]^{i_\alpha} & C\alpha \ar[r]^{\partial_\alpha} & \Sigma Y \ar[r]^{-\Sigma\alpha} & \Sigma X \\
     & & & & \\
     Z \ar[r]^{\beta} & Y \ar[r]^{i_\beta} & C\beta \ar@{-->}[uu]^{f} \ar[r]^{\partial_\beta} & \Sigma Z \ar@{-->}[uu]^{g} \ar[r]^{-\Sigma\beta} & \Sigma Y \ar@{=}[uu] &
      }
\end{displaymath}

Take $f\in L_\beta\alpha$. Since both lines are cofiber sequences, there exists a co-extension $g\in L^\alpha\beta$ such that the diagram commutes. The commutativity of the middle square gives the claim.
\end{proof}

\begin{lem}
Let $$W\xrightarrow{\gamma}Z\xrightarrow{\beta}Y\xrightarrow{\alpha}X$$ be a sequence of homotopy classes of maps.
Suppose that $\alpha\beta=0$ and $\beta\gamma=0$.
Then the two sets of maps
$-L^\alpha\beta\cdot\Sigma\gamma$ and $ i_\alpha\cdot L_\beta\alpha\cdot L^\beta\gamma$ in $[\Sigma W, C\alpha]$ are equal.
\end{lem}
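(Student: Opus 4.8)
The plan is to reduce the statement to Lemma 5.3 by precomposition, treating $\gamma$ as an extra variable fed in on the right. By exact analogy with $L^\alpha\beta$, the hypothesis $\beta\gamma=0$ lets me form the coextension set $L^\beta\gamma\subseteq[\Sigma W, C\beta]$, consisting of those maps $h$ for which the composite $\Sigma W\xrightarrow{h} C\beta\xrightarrow{\partial_\beta}\Sigma Z$ equals $-\Sigma\gamma$; its indeterminacy is $i_\beta\cdot[\Sigma W, Y]$. First I would record that this set is nonempty: applying $[\Sigma W,-]$ to the cofiber sequence of $\beta$ gives exactness of $[\Sigma W, C\beta]\xrightarrow{(\partial_\beta)_*}[\Sigma W,\Sigma Z]\xrightarrow{(-\Sigma\beta)_*}[\Sigma W,\Sigma Y]$, and since $(-\Sigma\beta)(-\Sigma\gamma)=\Sigma(\beta\gamma)=0$, the class $-\Sigma\gamma$ lifts through $\partial_\beta$. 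The same exactness argument, using $\alpha\beta=0$, shows $L^\alpha\beta$ and $L_\beta\alpha$ are nonempty.

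The key step is to apply Lemma 5.3 and then precompose. Lemma 5.3 asserts the equality of sets $L^\alpha\beta\cdot\partial_\beta=i_\alpha\cdot L_\beta\alpha$ inside $[C\beta, C\alpha]$. Precomposing both sides with the set $L^\beta\gamma\subseteq[\Sigma W, C\beta]$ preserves the equality, so
$$i_\alpha\cdot L_\beta\alpha\cdot L^\beta\gamma = L^\alpha\beta\cdot\partial_\beta\cdot L^\beta\gamma$$
as subsets of $[\Sigma W, C\alpha]$, and the left-hand side here is exactly the right-hand side of the Lemma. It then remains to collapse the inner composite $\partial_\beta\cdot L^\beta\gamma$. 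By the very definition of $L^\beta\gamma$, every $h\in L^\beta\gamma$ satisfies $\partial_\beta\circ h=-\Sigma\gamma$; since $L^\beta\gamma$ is nonempty, the set $\partial_\beta\cdot L^\beta\gamma$ is the singleton $\{-\Sigma\gamma\}$. Therefore
$$L^\alpha\beta\cdot\partial_\beta\cdot L^\beta\gamma = L^\alpha\beta\cdot(-\Sigma\gamma) = -L^\alpha\beta\cdot\Sigma\gamma,$$
using additivity of composition in the stable category to pull the sign out front. Chaining the two displays gives $i_\alpha\cdot L_\beta\alpha\cdot L^\beta\gamma = -L^\alpha\beta\cdot\Sigma\gamma$, which is the claim.

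I do not expect a genuine obstacle, since all the homotopical content already resides in Lemma 5.3 and the present statement is its one-variable extension by precomposition; the delicate points are purely bookkeeping. First, the argument is a chain of equalities \emph{between sets} of maps, so I should note explicitly that precomposing a set equality with a fixed set again yields a set equality — this follows immediately from the definitions and has the pleasant consequence that the two indeterminacies agree automatically, with no separate computation needed. Second, I must confirm the nonemptiness of $L^\beta\gamma$ so that $\partial_\beta\cdot L^\beta\gamma$ is genuinely the singleton $\{-\Sigma\gamma\}$ rather than the empty set; this is the one place where the hypothesis $\beta\gamma=0$ enters. Third, I should keep the sign conventions consistent with the cofiber sequences as written, which is precisely why the $-\Sigma\gamma$ and the leading minus sign on the left-hand side of the statement appear.
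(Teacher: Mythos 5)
Your proof is correct, and it takes a genuinely more economical route than the paper's own argument. One bookkeeping point first: the result you invoke --- the set equality $L^\alpha\beta\cdot\partial_\beta = i_\alpha\cdot L_\beta\alpha$ in $[C\beta,C\alpha]$ --- is the paper's Lemma 5.1, not Lemma 5.3 (in the paper, Lemma 5.3 is the sphere-level corollary deduced \emph{from} the present statement, so that citation number would be circular); since you spell out the content of what you cite, the logic is sound, but the label should be corrected. As for the comparison: the paper proves the statement in two stages. It first computes the indeterminacies of the two sides separately (using $\partial_\beta\cdot L^\beta\gamma=-\Sigma\gamma$ and $i_\alpha\cdot L_\beta\alpha\cdot i_\beta=i_\alpha\cdot\alpha=0$) and checks that they agree; it then assembles a three-row ladder of cofiber sequences in which Lemma 5.1 supplies, for arbitrary choices in $L^\beta\gamma$ and $L_\beta\alpha$, compatible choices in $L_\gamma\beta$ and $L^\alpha\beta$ making all squares commute, which exhibits one common element of the two sets; equality then follows from the coset structure. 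Your argument collapses both stages into pure set algebra: equality of sets is preserved under composition with a fixed set, composition of sets of maps is associative, and $\partial_\beta\cdot L^\beta\gamma$ is the singleton $\{-\Sigma\gamma\}$ once $L^\beta\gamma\neq\emptyset$ --- and you correctly isolate that nonemptiness as the one place the hypothesis $\beta\gamma=0$ enters. This makes the indeterminacy comparison automatic rather than a separate verification, which is exactly the right simplification: the paper needs that bookkeeping only because its diagram argument produces a single common element rather than the whole set at once. What the paper's diagram buys in exchange is the explicit matching of representatives (which coextension goes with which extension), the same mechanism by which Lemma 5.1 itself is proved; it is instructive but not needed for the set-level statement.
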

\begin{proof}
First, the indeterminacy of the former is $i_\alpha\cdot[\Sigma Z,X]\cdot\Sigma\gamma$. The indeterminacy of the latter is $$i_\alpha\cdot[\Sigma Z,X]\cdot \partial_\beta\cdot L^\beta\gamma + i_\alpha\cdot L_\beta\alpha\cdot i_\beta\cdot[\Sigma W, Y]$$
Note that $ \partial_\beta\cdot L^\beta\gamma = - \Sigma\gamma$ and $i_\alpha\cdot L_\beta\alpha\cdot i_\beta = i_\alpha\cdot \alpha =0$. So the two sets have the same indeterminacy.

We have the following diagram:
\begin{displaymath}
\xymatrix{
W \ar[d]^{\Sigma^{-1}L^\beta\gamma} \ar[rr]^\gamma && 
Z \ar@{=}[d] \ar[rr] ^{i_\gamma} && 
C\gamma \ar[d]^{L_\gamma\beta} \ar[r]^{\partial_\gamma} &\Sigma W \ar[d]^{L^\beta\gamma}\ar[r]^{-\Sigma\gamma}& \Sigma Z \ar@{=}[d] \\
\Sigma^{-1}C\beta \ar[d]^{\Sigma^{-1}L_\beta\alpha} \ar[rr]^{-\Sigma^{-1}\partial_\beta} && 
Z \ar[d]^{\Sigma^{-1}L^\alpha\beta} \ar[rr]^\beta && Y \ar@{=}[d] \ar[r]^{i_\beta} & C\beta \ar[d]^{L_\beta\alpha} \ar[r] ^{\partial_\beta} & \Sigma Z \ar[d]^{L^\alpha\beta} \ar[r]^{-\Sigma \beta} &\Sigma Y \ar@{=}[d] \\
\Sigma^{-1}X\ar[rr]^{-\Sigma^{-1}i_\alpha}&&
\Sigma^{-1}C\alpha\ar[rr]^{-\Sigma^{-1}\partial_\alpha}&& Y\ar[r]^\alpha & X\ar[r]^{i_\alpha} &C\alpha\ar[r]^{\partial_\alpha}&\Sigma Y
}
\end{displaymath}
By Lemma 5.1, with suitable choices, all the squares commute, then claim follows. In fact, taking any choices of $L^\beta\gamma$ and $L_\beta\alpha$, Lemma 5.1 says there exist choices for $L_\gamma\beta$ and $L^\alpha\beta$, making the diagrams commute.
\end{proof}

Now we have the following lemma as a corollary of Lemma 5.2 when the spectra $X, \ Y, \ Z, \ W$ are all spheres.

\begin{lem}
Let $\alpha, \ \beta$ and $\gamma$ be maps between spheres.
$$\alpha:S^{|\alpha|}\rightarrow S^0, \  
	\beta:S^{|\alpha|+|\beta|}\rightarrow S^{|\alpha|}, \ \gamma:S^{|\alpha|+|\beta|+|\gamma|}\rightarrow S^{|\alpha|+|\beta|}.$$ 
	
	Then in the Atiyah-Hirzebruch spectral sequence of $C\alpha$, we have a $\gamma$-extension
	$$ \beta[|\alpha|+1]\cdot\Sigma\gamma = \langle \alpha, \beta, \gamma \rangle[0]$$ 
\end{lem}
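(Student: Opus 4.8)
I want to prove Lemma 5.3, which asserts that in the Atiyah-Hirzebruch spectral sequence of $C\alpha$, the element $\beta[|\alpha|+1]$ supports a $\gamma$-extension whose target is the Toda bracket $\langle \alpha, \beta, \gamma\rangle$ placed on the bottom cell (the $[0]$ cell). The plan is to specialize Lemma 5.2 to the case where $X = S^0$, $Y = S^{|\alpha|}$, $Z = S^{|\alpha|+|\beta|}$, and $W = S^{|\alpha|+|\beta|+|\gamma|}$, and then translate the equality of the two sets of maps in $[\Sigma W, C\alpha]$ into the claimed extension statement. Since $\alpha\beta = 0$ and $\beta\gamma = 0$ both hold (these are exactly the conditions under which the Toda bracket $\langle \alpha,\beta,\gamma\rangle$ is defined), the hypotheses of Lemma 5.2 are satisfied.

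\textbf{Identifying the two sides.} First I would unwind the two sides of Lemma 5.2 in this spherical setting. Here $C\alpha$ is a two-cell complex with cells in dimensions $0$ and $|\alpha|+1$, so $\Sigma W \simeq S^{|\alpha|+|\beta|+|\gamma|+1}$ maps into $C\alpha$. The set $i_\alpha \cdot L_\beta\alpha \cdot L^\beta\gamma$ should be recognized as (a representative of) the Toda bracket $\langle \alpha, \beta, \gamma\rangle$ composed with the inclusion $i_\alpha: S^0 = X \hookrightarrow C\alpha$ of the bottom cell: indeed, $L_\beta\alpha$ is a coextension of $\alpha$ over $C\beta$ and $L^\beta\gamma$ is an extension whose boundary recovers $-\Sigma\gamma$, so the composite $L_\beta\alpha \cdot L^\beta\gamma$ is precisely the standard construction of the Toda bracket as a map $\Sigma W \to X = S^0$. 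Pushing into the bottom cell via $i_\alpha$ yields $\langle \alpha, \beta, \gamma\rangle[0]$. On the other side, $-L^\alpha\beta \cdot \Sigma\gamma$ is the coextension of $\beta$ living on the top cell; under the AHSS identification, $L^\alpha\beta$ is detected by the element $\beta[|\alpha|+1]$ sitting on the top cell of $C\alpha$, and precomposing with $\Sigma\gamma$ realizes the $\gamma$-multiplication. So this side is $\beta[|\alpha|+1]\cdot \Sigma\gamma$.

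\textbf{Assembling the proof.} With these two identifications in hand, Lemma 5.2 gives the equality of sets
$$-L^\alpha\beta\cdot\Sigma\gamma = i_\alpha\cdot L_\beta\alpha\cdot L^\beta\gamma$$
in $[\Sigma W, C\alpha]$, and reading off the left side as $\beta[|\alpha|+1]\cdot\Sigma\gamma$ and the right side as $\langle\alpha,\beta,\gamma\rangle[0]$ yields the desired extension relation. The sign $-1$ on the left is absorbed into the definition/choice of representatives and does not affect the statement at the level of the AHSS extension. I would be careful to note that both sides are genuine sets (with indeterminacy), and Lemma 5.2 guarantees not only that they share a common element but that they have the same indeterminacy, so the identification is an equality of cosets, matching the usual indeterminacy of the Toda bracket.

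\textbf{Main obstacle.} The routine part is the algebra of Lemma 5.2; the real content is the dictionary between the map-theoretic constructions $L^\alpha\beta$, $L_\beta\alpha$, $L^\beta\gamma$ and their interpretations in the Atiyah-Hirzebruch spectral sequence of $C\alpha$. The hard part will be justifying that $L^\alpha\beta$, restricted appropriately, is detected in the AHSS by the class $\beta[|\alpha|+1]$ on the top cell, and that the composite $i_\alpha \cdot L_\beta\alpha \cdot L^\beta\gamma$ is detected by $\langle\alpha,\beta,\gamma\rangle$ sitting on the $[0]$ cell. This requires relating the cell-attaching structure of $C\alpha$ and $C\beta$ to the AHSS filtration carefully and making compatible choices of extensions and coextensions so that the identifications hold on the nose; the cell diagrams referenced in the Appendix are presumably the intended tool for organizing exactly this bookkeeping.
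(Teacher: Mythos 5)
Your proposal is correct and takes essentially the same route as the paper: specialize Lemma 5.2 to spheres, identify $i_\alpha\cdot L_\beta\alpha\cdot L^\beta\gamma$ with $\langle\alpha,\beta,\gamma\rangle[0]$ and $-L^\alpha\beta\cdot\Sigma\gamma$ with $\beta[|\alpha|+1]\cdot\Sigma\gamma$, then conclude. The only difference is one of emphasis: what you flag as the ``hard part'' (the dictionary between extensions/coextensions and AHSS classes) the paper treats as purely definitional, since the set of classes represented by $\beta[|\alpha|+1]$ is by definition $-L^\alpha\beta$, and the Toda bracket is by definition the composite $L_\beta\alpha\cdot L^\beta\gamma$.
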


\begin{proof}
	By definition, the set of classes represented by $\beta[|\alpha|+1]$ in the Atiyah-Hirzebruch spectral sequence is $-L^\alpha\beta$. On the other hand, by definition, $L_\beta\alpha\cdot L^\beta\gamma$ is $\langle \alpha,\beta,\gamma \rangle$, and $i_\alpha\cdot L_\beta\alpha\cdot L^\beta\gamma$ is $\langle \alpha,\beta,\gamma \rangle[0]$. So the claim follows from Lemma 5.2.
\end{proof}

\section{Appendix}

In this appendix, we use cell diagrams as intuition for the statements of the lemmas in Section 5.  It is very helpful when thinking of CW spectra. See \cite{BJM, WX1, Xu} for example. For simplicity, we restrict to the cases when the spectra $X, \ Y, \ Z, \ W$ are all spheres. For the definition of cell diagrams, see \cite{BJM}.

Let $\alpha, \ \beta$ be classes in the stable homotopy groups of spheres such that $\beta \cdot \alpha =0$. We denote the cofiber of $\alpha$ by
\begin{displaymath}
    \xymatrix{
 *+[o][F-]{} \ar@{-}[d]^{\alpha} \\
 *+[o][F-]{}}
\end{displaymath}
We denote the maps $i_\alpha$ and $\partial_\alpha$ by
\begin{displaymath}
    \xymatrix{
 & *+[o][F-]{} \ar@{-}[d]^{\alpha} & & *+[o][F-]{} \ar@{-}[d]_{\alpha} \ar[r] & *+[o][F-]{} \\
 *+[o][F-]{} \ar[r] & *+[o][F-]{} & & *+[o][F-]{} & }
\end{displaymath}
and the extension and co-extension maps $L_\alpha \beta$ and $L^\alpha \beta$ by
\begin{displaymath}
    \xymatrix{
*+[o][F-]{} \ar@{-}[d]_{\alpha} & & *+[o][F-]{} \ar[r]^{\beta} & *+[o][F-]{} \ar@{-}[d]^{\alpha} \\
 *+[o][F-]{} \ar[r]^{\beta} & *+[o][F-]{} & & *+[o][F-]{} }
\end{displaymath}


Then Lemma 5.1 says the following two sets of maps are equal:
\begin{displaymath}
    \xymatrix{
 *+[o][F-]{} \ar@{-}[d]^{\beta} \ar[r] & *+[o][F-]{} \ar[dr]^{\beta} & \\
 *+[o][F-]{} & & *+[o][F-]{} \ar@{-}[d]^{\alpha} \\
  & & *+[o][F-]{}}
\end{displaymath}

\begin{displaymath}
    \xymatrix{
     *+[o][F-]{} \ar@{-}[d]^{\beta} & & \\
    *+[o][F-]{} \ar[dr]^{\alpha} & & *+[o][F-]{} \ar@{-}[d]^{\alpha} \\
   & *+[o][F-]{} \ar[r] & *+[o][F-]{} }
\end{displaymath}


with the same indeterminacy:
\begin{displaymath}
    \xymatrix{
     *+[o][F-]{} \ar@{-}[d]^{\beta} \ar[r] & *+[o][F-]{} \ar[ddr]^{\phi} & & \\
    *+[o][F-]{} & & & *+[o][F-]{} \ar@{-}[d]^{\alpha} \\
   & & *+[o][F-]{} \ar[r] & *+[o][F-]{} }
\end{displaymath}
where $\phi\in\pi_{|\alpha|+|\beta|+1}S^0$ could be any class. 


Suppose further that $\beta\cdot\gamma=0$. Pre-composing with $L^\beta \gamma$, Lemma 5.3 says that the following two sets of maps are equal:
\begin{displaymath}
    \xymatrix{
  & *+[o][F-]{} \ar[r]^{\gamma} & *+[o][F-]{} \ar[r]^{\beta} & *+[o][F-]{} \ar@{-}[d]^{\alpha} \\
  & & & *+[o][F-]{}}
\end{displaymath}

\begin{displaymath}
    \xymatrix{
     *+[o][F-]{} \ar[r]^{\gamma} & *+[o][F-]{} \ar@{-}[d]^{\beta} & & *+[o][F-]{} \ar@{-}[d]^{\alpha} \\
   & *+[o][F-]{} \ar[r]^{\alpha} & *+[o][F-]{} \ar[r] & *+[o][F-]{} }
\end{displaymath}



\begin{thebibliography}{99}

\bibitem{BJM}
M.G. Barratt, J.D.S. Jones and M.E. Mahowald.
Relations amongst Toda brackets and the Kervaire invariant in dimension $62$.
J. London Math. Soc. 30(1984), 533--550.

\bibitem{BMT}
M.G. Barratt, M.E. Mahowald and M.C.Tangora.
Some differentials in the Adams spectral sequence. II
Topology. 9(1970), 309--316.

\bibitem{Br1}
Robert Bruner.
A new differential in the Adams spectral sequence.
Topology 23(1984), 271-276.

\bibitem{Br2}
Robert Bruner.
The cohomology of the mod 2 Steenrod algebra: a computer calculation. http://www.math.wayne.edu/~rrb/papers/cohom.pdf

\bibitem{Isa1}
Daniel C. Isaksen.
Stable stems.
arXiv:1407.8418.

\bibitem{Isa2}
Daniel C. Isaksen.
Classical and motivic Adams charts.
arXiv:1401.4983.

\bibitem{IX}
Daniel C. Isaksen and Zhouli Xu.
Motivic stable homotopy and the stable 51 and 52 stems.
Topology and its Applications. Volume 190(2015), 31--34.

\bibitem{Koc}
Stanley O. Kochman.
Stable homotopy groups of spheres.
Lecture Notes in Mathematics, vol. 1423, Springer-Verlag, Berlin, 1990. A computer-assisted approach.

\bibitem{KM}
Stanley O. Kochman and Mark E. Mahowald.
On the computation of stable stems.
Contemporary Mathematics 181 (1993)299-316.

\bibitem{KP}
Daniel S. Kahn and Stewart B. Priddy.
The transfer and stable homotopy theory.
Math. Proc. Cambridge Philos. Soc., 83(1):103--111, 1978.

\bibitem{Lin}
Wen Hsiung Lin.
Algebraic Kahn-Priddy theorem.
Pacific J. Math., 96 (1981), 435--455.

\bibitem{May}
J. Peter May.
Matric Massey products.
J. Algebra 12(1969), 533--568.

\bibitem{Mos}
R. M. F. Moss.
Secondary compositions and the Adams spectral sequence.
Math. Z. 115(1970), 283--310.

\bibitem{MT}
Mark Mahowald and Martin Tangora.
Some differentials in the Adams spectral sequence.
Topology 6 (1967) 349--369.

\bibitem{Rav}
Douglas C. Ravenel.
Complex cobordism and stable homotopy groups of spheres.
Pure and Applied Mathematics, vol. 121, Academic Press, Inc., Orlando, FL, 1986.

\bibitem{Tan1}
Martin C. Tangora.
Some Massey products in Ext.
Topology and representation theory (Evanston, IL, 1992), 269-280, Contemp. Math., 158, Amer. Math. Soc., Providence, RI, 1994.

\bibitem{Tan2}
Martin C. Tangora.
On the cohomology of the Steenrod algebra.
Math. Z. 116(1970), 18-64.

\bibitem{Tan3}
Martin C. Tangora.
Some extension problems in the Adams spectral sequence.
Aarhus Univ., Aarhus, 1970. Mat. Inst.,Aarhus Univ., Aarhus, 1970, pp. 578-587. Various Publ. Ser., No. 13.

\bibitem{Tod}
Hirosi Toda.
Composition methods in homotopy groups of spheres.
Annals of Mathematics Studies 49, Princeton University Press, ISBN 978-0-691-09586-8.

\bibitem{WX1}
Guozhen Wang and Zhouli Xu.
The triviality of the 61-stem in the stable homotopy groups of spheres.
Annals of Mathematics. Vol.186 (2017), Issue 2, 501-580.

\bibitem{WX2}
Guozhen Wang and Zhouli Xu.
The algebraic Atiyah-Hurzebruch spectral sequence of real projective spectra.
arXiv:1601.02185.

\bibitem{Xu}
Zhouli Xu.
The Strong Kervaire invariant problem in dimension 62.
Geometry and Topology 20-3 (2016), 1611--1624.

\end{thebibliography}
\end{document}